\documentclass[11pt, leqno]{amsart}
\usepackage{amsmath}
\usepackage{amsthm}
\usepackage{newtxtext, newtxmath}
\usepackage{mathrsfs}

\usepackage[colorinlistoftodos]{todonotes}
\renewcommand{\nu}{\upsilon}

\usepackage{graphicx}
\usepackage[font=small,labelfont=bf]{caption}
\usepackage{epstopdf}
\usepackage{xcolor}
\usepackage{float}
\usepackage{pgfplots}
\usepackage{listings}
\usepackage{longtable}
\usepackage{comment}
\usepackage{esint}
\usepackage{nicefrac}
\usepackage{tikz}
\usetikzlibrary{arrows, patterns, patterns.meta, calc, math, decorations.markings}
\usepackage[normalem]{ulem} 
\usepackage[parfill]{parskip}
\usepackage{esint}
\usepackage{linegoal}

\usepackage{mathtools}
\usepackage[utf8]{inputenc}
\usepackage[T1]{fontenc}
\usepackage[shortlabels, inline]{enumitem}
\DeclarePairedDelimiterX\set[1]\lbrace\rbrace{#1}

\usepackage{scalerel}[2014/03/10]
\usepackage[usestackEOL]{stackengine}

\usepackage[backref=page]{hyperref}
\hypersetup{
plainpages=false,
colorlinks,
linkcolor={cyan!90!black},
citecolor={magenta},
urlcolor={red!90!black},
bookmarksdepth=2,} 
\usepackage[noadjust]{cite}

\newtheorem{thm}{Theorem}[section]
\newtheorem{cor}[thm]{Corollary}
\newtheorem{prop}[thm]{Proposition}
\newtheorem{lem}[thm]{Lemma}
\newtheorem{qual-ass}[thm]{Qualitative Assumption}

\theoremstyle{definition}
\newtheorem{defn}[thm]{Definition}

\theoremstyle{remark}
\newtheorem{rem}[thm]{Remark}

\renewcommand{\L}{\mathrm{L}}
\newcommand{\Cont}{\mathrm{C}}

\newcommand{\e}{\mathrm{e}}
\renewcommand{\d}{\mathrm{d}}
\newcommand{\ii}{\mathrm{i}}
\newcommand{\dd}{\,\mathrm{d}}
\newcommand{\R}{\mathbb{R}}
\newcommand{\C}{\mathbb{C}}
\newcommand{\N}{\mathbb{N}}
\renewcommand{\S}{\mathrm{S}}
\newcommand{\one}{\boldsymbol{1}}
\newcommand{\eps}{\varepsilon}
\renewcommand{\Re}{\operatorname{Re}}
\renewcommand{\Im}{\operatorname{Im}}
\DeclareMathOperator{\Div}{div}
\DeclareMathOperator{\dom}{dom}
\renewcommand{\MR}{\operatorname{MR}}

\colorlet{shadecolor}{gray!20}
\pgfplotsset{compat=1.9}
\usepgflibrary{fpu}
\makeatletter
\let\c@equation\c@thm
\makeatother
\numberwithin{equation}{section}

\author{Moritz Egert}
\address{Department of Mathematics, Technical University of Darmstadt, Schlossgartenstr.\ 7, 64289 Darmstadt, Germany}
\email{egert@mathematik.tu-darmstadt.de}
\author{Jan Giesselmann}
\address{Department of Mathematics, Technical University of Darmstadt, Dolivostr.\ 15, 64293 Darmstadt, Germany}
\email{giesselmann@mathematik.tu-darmstadt.de}
\author{Marc Hoffmann}
\email{hoffmann@mathematik.tu-darmstadt.de}
\keywords{contraction semigroup, maximal regularity, quantitative norm inequality}
\date{\today}
\subjclass[2020]{47D07, 47D06, 35K90}
\title{Quantitative maximal $\L^p$-regularity for generators of real contraction semigroups}

\begin{document}
\begin{abstract} 
Let $-A$ be the generator of a real, symmetric contraction semigroup on a $\sigma$-finite measure space. We prove a universal bound for maximal $\L^p$-regularity: In space-time norms
\begin{align*}
    \|Au\|_p \leq \big(p^* + \tfrac{p^*}{2(p^*-1)}\big) \|f\|_p,
\end{align*}
where $p^* = \max(p,\nicefrac{p}{p-1})$ and $u$ is the mild solution to the Cauchy problem with right-hand side $f$ starting at $u(0)=0$. Linear growth of the constant in $p^*$ is optimal. Our proof introduces an elementary approach to maximal regularity that avoids both vector-valued harmonic analysis and dilation theory. We also obtain quantitative maximal regularity estimates for real, non-symmetric semigroups that are contractive on a sector.
\end{abstract}

\maketitle

\section{Introduction}
\label{sec:intro}
\subsection{Context}

Let $(\Omega,\mu)$ be a $\sigma$-finite measure space, and write $\L^p \coloneqq \L^p(\Omega,\mu)$ for the complex Lebesgue spaces. Suppose that $-A$ generates a bounded analytic $C_0$-semigroup $(T_t)_{t\geq0}$ on $\L^2$ which is real and absolutely contractive. Here, real means that each $T_t$ preserves real-valued functions, and absolute contractivity is the property that
\begin{equation} 
\label{eq:contraction property} 
\|T_t f\|_p \leq \|f\|_p, \qquad f\in\L^2\cap\L^p, 
\end{equation} 
for every $t\geq0$ and $1\leq p\leq\infty$. For $1<p<\infty$, we use the same notation for the consistent $\L^p$-realization of the semigroup and its generator. We call such a semigroup real contraction semigroup on $(\Omega, \mu)$ for short. This includes the symmetric submarkovian semigroups~\cite{Stein1970, Ouhabaz2005}. Symmetry means that each $T_t$ is symmetric on $\L^2$, or, equivalently, that $A$ is self-adjoint. In 1987, Lamberton~\cite{Lamberton1987} proved that the abstract
Cauchy problem
\begin{align}
\label{eq:ACP}
    u^\prime(t)+Au(t)=f(t), \qquad t>0, \qquad u(0)=0,
\end{align}
has maximal $\L^p$-regularity for every $1<p<\infty$. The mild solution is given by
\begin{align*}
    u(t)\coloneqq\int_0^t T_{t-s}f(s) \dd s,
    \qquad t\geq 0.
\end{align*}
Maximal regularity means that, for each such $p$, there is a constant $C$ such that, for every $f\in\L^p((0,\infty);\L^p)$, this solution belongs to $\dom(A)$ for almost every $t>0$ and satisfies \begin{equation} 
\label{eq:MR} \biggl(\int_0^\infty \|Au(t)\|_p^p \dd t\biggr)^{1/p} \leq C \biggl(\int_0^\infty \|f(t)\|_p^p \dd t\biggr)^{1/p}. 
\end{equation} 
For background on maximal regularity and its applications
to nonlinear evolution equations, we refer to
\cite{HvNVW3, DHP, KW}. 

We denote the best constant in \eqref{eq:MR} by $\MR_p(A)$ and write $p^* \coloneqq \max\{p, \nicefrac{p}{p-1}\}$ for $p>1$. 
Very little is known about the size of $\MR_p(A)$ and its growth as $p^*\to\infty$. One quantitative result is due to Hyt\"onen~\cite{Hytonen}: for the Poisson operator $A=\smash{\sqrt{-\Delta}}$ on $\R$, he obtained \begin{equation} \label{eq:Hytonen} \frac{p^*-1}{2} \leq \MR_p(\sqrt{-\Delta}) \leq p^*-1+\cot\Bigl(\frac{\pi}{2p^*}\Bigr). \end{equation} Here we have substituted the exact values of the constants $\beta_{p,\L^p}$ and $\hbar_{p,\L^p}$ appearing in~\cite{Hytonen}; see, for example,~\cite{HvNVW1}.
\subsection{Main results}

Our first main result gives a fully explicit estimate, uniform over all real, symmetric contraction semigroups.
\begin{thm}
\label{thm:symm}
Let $-A$ generate a real, symmetric contraction semigroup on a $\sigma$-finite measure space $(\Omega,
\mu)$, and let $1<p<\infty$. Then 
\begin{align*}
    \MR_p(A) \leq p^* + \frac{p^*}{2(p^*-1)}.
\end{align*}
\end{thm}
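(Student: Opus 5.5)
The plan is to argue by duality and a Bellman-function/energy argument in time, avoiding vector-valued harmonic analysis. Set $q=p/(p-1)$. By density it suffices to show, for nice real-valued $f\in\L^p((0,\infty);\L^p)$ and $g\in\L^q((0,\infty);\L^q)$ (say bounded, compactly supported in time, with values in $\L^2\cap\L^p\cap\L^q$), that
\begin{align*}
\Bigl|\int_0^\infty \langle Au(t),g(t)\rangle \dd t\Bigr| \leq \Bigl(p^*+\frac{p^*}{2(p^*-1)}\Bigr)\|f\|_p\|g\|_q .
\end{align*}
Since $T_t$ is real, we may reduce to real $f,g$. The constant for complex data then needs separate care, because the complexification of a real operator on $\L^p$ need not have the same norm.

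Besides the forward mild solution $u$ of \eqref{eq:ACP}, I introduce the backward solution $v(s)=\int_s^\infty T_{t-s}g(t)\dd t$, which solves $-v'+Av=g$ and vanishes for large $s$. Symmetry of $A$ gives
\begin{align*}
\int_0^\infty\langle Au,g\rangle\dd t=\int_0^\infty\langle f,Av\rangle\dd s ,
\end{align*}
and both sides equal a symmetric double integral over $\{s<t\}$ of $\langle A^{1/2}T_{(t-s)/2}f(s),A^{1/2}T_{(t-s)/2}g(t)\rangle$. The heart of the proof is to dominate this pairing by considering
\begin{align*}
\Phi(\tau)\coloneqq \int_\Omega B\bigl(u(\tau),v(\tau)\bigr)\dd\mu
\end{align*}
for a Nazarov--Treil type Bellman function $B\colon\R^2\to\R$. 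I would take $B(x,y)=\tfrac{1}{p}|x|^p+\tfrac{1}{q}|y|^q+\delta\,|x|^2|y|^{2-q}$ (for $p\ge2$, with a suitable regularisation and symmetric modification), with $\delta$ tuned at the end. Since $u(0)=0$ and $v(\tau)=0$ for large $\tau$, the boundary terms are controlled: $\Phi(\infty)=\int\tfrac1p|u|^p$ and $\Phi(0)=\int\tfrac1q|v(0)|^q$. These are bounded by $\|f\|_p^p$ and $\|g\|_q^q$ up to elementary factors, using contractivity \eqref{eq:contraction property} and H\"older in time. Integrating $\Phi'$ over $(0,\infty)$ and using the equations for $u$ and $v$ yields
\begin{align*}
-\int_0^\infty\!\!\int_\Omega \bigl(\partial_xB\,Au-\partial_yB\,Av\bigr)\dd\mu\dd\tau
\end{align*}
plus source terms $\int\partial_xB\,f+\int\partial_yB\,g$. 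The source terms are controlled by H\"older and Young, using the size bounds $|\partial_xB|\lesssim |x|^{p-1}+|y|^{q}$-type estimates on $B$.

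The main obstacle is the generator term. I need a ``convexity with respect to the semigroup'' inequality: for $w=(u,-v)$ (or its rotation), the quantity $\int\langle\nabla B(w),\mathcal A w\rangle$ should be bounded below by a positive multiple of the cross term $|\langle Au,v\rangle|$-like quantity that reconstructs $\langle f,Av\rangle$, with the multiple dictated by the lower Hessian bound of $B$. For symmetric real contractions I would try to prove this directly from contractivity on all $\L^r$, without positivity. The first attempt is the following. Write $\int\langle\nabla B(w),Aw\rangle=\lim_{t\to0}t^{-1}\int\bigl(\nabla B(w)\cdot(w-T_tw)\bigr)$. Then use that, for symmetric $T_t$ contractive on $\L^1$ and $\L^\infty$, the kernel representation (via a Beurling--Deny type decomposition $\mathcal E(w)=\tfrac12\iint|w(x)-w(y)|^2\dd J+\int|w|^2\dd k$ of the quadratic form, valid for real sub-Markovian-like but possibly non-positive operators after splitting into positive and negative parts) turns this into a double integral of $(\nabla B(w(x))-\nabla B(w(y)))\cdot(w(x)-w(y))$. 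Pointwise convexity estimates of $B$ then apply, with the killing part handled by $\nabla B(w)\cdot w\ge0$. If non-positivity obstructs the Beurling--Deny route, I would fall back on Jensen-type inequalities $\int B(T_tw)\le\int B(w)$ for the relevant class of functions via interpolation.

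Finally, I would optimise $\delta$ and the Young parameters. The expectation is that the Hessian lower bound of the Nazarov--Treil function gives the leading term $p^*$, and the source terms contribute the correction $p^*/(2(p^*-1))$.
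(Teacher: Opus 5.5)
\textbf{There is a genuine gap, and it sits at the heart of the argument.} The Bellman step, a ``convexity with respect to the semigroup'' lower bound for $\Phi(\tau)=\int_\Omega B(u(\tau),v(\tau))\dd\mu$ by a cross term, cannot work for any choice of $B$.

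Since $u'=f-Au$ while $v'=Av-g$, the generator part of $\Phi'$ is $-\langle\partial_xB(u,v),Au\rangle+\langle\partial_yB(u,v),Av\rangle$, with opposite signs in the two slots. Replacing $v$ by $-v$ changes nothing, because $\partial_{\tilde y}\tilde B\,A\tilde v=\partial_yB\,Av$ by linearity of $A$.

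Test this on $A=-\Delta$. Integrating by parts, the mixed second derivatives cancel exactly, and the generator part equals $-\int B_{xx}(u,v)|\nabla u|^2+\int B_{yy}(u,v)|\nabla v|^2$. So:
\begin{enumerate}[(i)]
\item There is no $\nabla u\cdot\nabla v$ term at all for a Nazarov--Treil Hessian bound to act on.
\item For convex $B$, the two surviving terms have opposite signs. For $B=\frac1p|x|^p+\frac1q|y|^q$ you simply get the difference of the two separate energy dissipations, just as $\Phi(\infty)-\Phi(0)$ is a difference rather than a sum.
\item The source terms pair $f$ and $g$ only with functions of $(u,v)$, so $Au$ never meets $g$, and $\int_0^\infty\langle Au,g\rangle\dd t$ cannot appear.
\end{enumerate}
Equal-time pairings of $u$ and $v$ in fact measure the wrong quantity: $\int_0^\infty\langle Au(\tau),v(\tau)\rangle\dd\tau=\iint_{s<t}(t-s)\langle AT_{t-s}f(s),g(t)\rangle\dd s\dd t$. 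This carries an extra factor $t-s$ compared with the maximal-regularity pairing.

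The Nazarov--Treil mechanism is built for two forward flows $T_tf$, $T_tg$ and yields bilinear embeddings. Getting maximal regularity from those requires multiplier theorems and Dore--Venni, which produce exactly the lossy constants the introduction mentions. Finally, the value $p^*+\frac{p^*}{2(p^*-1)}$ is only ``expected'' in your plan; nothing in it produces this specific constant.

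\textbf{The missing idea is a Lyapunov functional that is nonlocal in time.} The paper first makes qualitative reductions: duality to $p>2$, Zygmund's identity to real data, Yosida approximants to bounded, exponentially stable generators, and an embedding into a positive symmetric semigroup. It then uses $V(v)=\int_0^\infty\|AT_tv\|_p^p\dd t$, whose dissipation along the solution is $p\Re H(Au,f)-\|Au\|_p^p$. This reduces the theorem to the bound $\Re H(v,h)\le\frac{1+q}{2}$ for normalized real $v,h$, where $H(v,h)=\int_0^\infty\langle\phi(T_tv),AT_th\rangle\dd t$. The bound comes from three steps:
\begin{enumerate}[(i)]
\item Symmetry gives $2H(v,h)=\langle\phi(v),h\rangle+\int_0^\infty\langle(\frac{\d}{\d t}+A)\phi(T_tv),T_th\rangle\dd t$.
\item Jensen's inequality (this is where positivity is used) gives $|(\frac{\d}{\d t}+A)\phi(T_tv)|\le-(\frac{\d}{\d t}+A)|\phi(T_tv)|$, so $|\phi(T_tv)|$ is a subsolution.
\item A backward comparison argument based on Kato's inequality gives $\|w(0)\|_q\le q\|u(0)\|_q$ for the resulting backward solution $w$.
\end{enumerate}
The factor $\frac12$ and the $q$ combine into $p\cdot\frac{1+q}{2}=p+\frac{p}{2(p-1)}$.

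Incidentally, Zygmund's identity also settles your worry about complex data: real right-hand sides suffice, with the same constant.
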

In particular, $\MR_p(A) \leq p^* + \nicefrac{1}{2} + o(1)$ as $p^* \to \infty$, which also improves the asymptotic upper bound for the Poisson semigroup in \eqref{eq:Hytonen}. At the same time, \eqref{eq:Hytonen} shows that the linear dependence on $p^*$ is best possible. Theorem~\ref{thm:symm} does not recover the bound $\MR_2(A) \leq 1$ that can be obtained from Plancherel's  theorem; see Section~\ref{sec:L2}. Thus, the inequality could be improved near $p=2$ via complex interpolation.

Universal inequalities for symmetric contraction semigroups have a long history, of which Theorem~\ref{thm:symm} is a further instance. We mention, among others, (i) the ergodic theorem of Dunford and Schwartz~\cite[Theorem VIII.7.7]{DS}, (ii) Liskevich and Perelmuter's formula for the angle of analyticity~\cite{Liskevich-Perelmuter}, see also \cite{Kriegler}, and (iii) the multiplier theorem of Carbonaro and Dragi\v{c}evi\'c~\cite{Carbonaro-Dragicevic}. While maximal regularity can be inferred from either (i)~\cite[Theorem~5.6]{KW} or (iii)~\cite{Dore-Venni}, the resulting bounds for $\MR_p(A)$ are highly suboptimal due to the vector-valued inequalities used as intermediate steps. 

Symmetric contraction semigroups are sectorially contractive. This means that
for every $p \in (1,\infty)$ there is an angle $\theta_p > 0$ such that $(T_t)_{t \geq 0}$ has an analytic extension to a contraction semigroup on $\L^p$ for complex times $t \in \S_{\theta_p}$. Here, $\S_\theta \subseteq \C$ is the open sector with opening half-angle $\theta$ symmetric about $(0,\infty)$. It is known that $\theta_p = \arccos|1- \nicefrac{2}{p}| > 0$ is optimal across the full class of symmetric contraction semigroups~\cite{Liskevich-Perelmuter}. In the following we only treat angles $\theta_p \leq \arccos|1- \nicefrac{2}{p}|$.
Our second main result applies to real, possibly non-symmetric sectorially contractive semigroups. This includes semigroups generated by divergence form operators; see Corollary~\ref{cor:Riesz-Thorin non-symmetric div} below.
\begin{thm}
    \label{thm:non-symm}
Let $-A$ generate a real, sectorially contractive semigroup on a $\sigma$-finite measure space $(\Omega,
\mu)$ and let $p$ be an even integer. Then 
\begin{align*}
    \MR_p(A) \leq 1 + (p-1) \cot(\theta_p).
\end{align*}
\end{thm}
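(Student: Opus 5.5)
The plan is an energy argument: test the equation against a power of $Au$, so that the sector angle enters only through a numerical-range inequality for $A$ on $\L^p$. By density I would take $f$ smooth, compactly supported in time and with values in a core, so that $u$ is a strong solution and every integral below is finite. Then I would write $v\coloneqq Au = f-u'$. Since $p$ is even, $|v|^{p-2}v = v^{p/2}\bar v^{p/2-1}$ is a polynomial in $v,\bar v$, which makes all time derivatives and integrations by parts algebraic. Testing $v=f-u'$ against $|v|^{p-2}\bar v$ on $\Omega\times(0,\infty)$ gives
\begin{align*}
\|v\|_p^p = \int_0^\infty\!\!\int_\Omega f\,|v|^{p-2}\bar v \dd\mu \dd t - \int_0^\infty\!\!\int_\Omega u'\,|v|^{p-2}\bar v \dd\mu \dd t .
\end{align*}
By H\"older the first term is at most $\|f\|_p\|v\|_p^{p-1}$, which produces the summand $1$ in the bound. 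It therefore suffices to show that the second term is at most $(p-1)\cot(\theta_p)\,\|f\|_p\|v\|_p^{p-1}$, after which we divide by $\|v\|_p^{p-1}$.

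The sector hypothesis enters as follows. Contractivity of $(T_z)$ on $\L^p$ for $z\in\S_{\theta_p}$ is equivalent, via Lumer--Phillips applied to $e^{\pm \ii\theta}A$ for $|\theta|<\theta_p$, to the numerical-range inequality
\begin{align*}
\bigl|\Im \langle Aw, |w|^{p-2} w\rangle\bigr| \leq \cot(\theta_p)\, \Re\langle Aw,|w|^{p-2}w\rangle, \qquad w\in\dom(A).
\end{align*}
Because the semigroup is real, I would use this for real $w$ and then polarize. For even $p$ the form $(w_1,\dots,w_p)\mapsto\int (Aw_1)\,w_2\cdots w_p \dd\mu$ is multilinear, so polarizing with complex linear combinations of real functions should yield a ``sectorial H\"older inequality''. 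Its shape would be
\begin{align*}
\Bigl|\int (Ag)\, h^{p-1} \dd\mu\Bigr| \lesssim \cot(\theta_p)\,\|Ag\|_p \|h\|_p^{p-1}
\end{align*}
modulo a diagonal accretive term. The factor $p-1$ should appear as the number of slots in $h^{p-1}$ through which the time derivative can be moved.

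To handle the $u'$-term I would integrate by parts in time. Since $A$ commutes with $\partial_t$, we have $v' = Au'$. Using $u(0)=0$ and decay at $\infty$, the expression $\int\!\!\int u'\,|v|^{p-2}\bar v$ is to be rewritten, through the product rule applied to the polynomial $|v|^{p-2}\bar v$, as $(p-1)$ terms of the form $\int\!\!\int (\text{antiderivative of }u')\cdot A(\cdots)$. The symmetric (real) part of these terms is a sign-definite accretive quantity $\Re\langle A u', |v|^{p-2}\bar v\rangle$-type expression that can be dropped. The antisymmetric part is then controlled by the sectorial H\"older inequality, with $u'$ re-expressed as $f-v$. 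The contributions coming from $v$ should again be absorbed by accretivity, leaving only $(p-1)\cot(\theta_p)\|f\|_p\|v\|_p^{p-1}$.

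The main obstacle is exactly this middle step: arranging the integration by parts so that the term which cannot be estimated by H\"older is precisely the imaginary part controlled by the sector condition, and establishing the polarized (multilinear) form of the numerical-range inequality with constant exactly $\cot(\theta_p)$ and no loss. My first attempt would be to prove the polarized inequality for real functions, where $w^{p-1}$ is a genuine polynomial, and to treat complex $f$ by applying the estimate to $e^{\ii\phi}f$ and averaging over $\phi$.
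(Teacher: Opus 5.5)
\textbf{There is a genuine gap.} Your opening reduction is legitimate; the inequality you reduce to is in fact true (see below). But the whole proof sits in the ``middle step'', and the mechanism you propose for it does not work.

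\begin{enumerate}[(i)]
\item For real $w$ the numerical-range condition gives nothing beyond accretivity. Since $A$ is real, $\Im\langle Aw,\phi(w)\rangle=0$. Its content lies in complex $w=a+\ii b$.
\item At a fixed time it is only a \emph{relative} bound, $|\Im z|\le\cot(\theta_p)\Re z$ for $z=\langle Aw,\phi(w)\rangle$. For unbounded $A$, $\Re z$ is not controlled by $\|w\|_p$. So you cannot discard the accretive part and still control the imaginary part.
\item Your displayed ``sectorial H\"older inequality'' is weaker than plain H\"older, which already gives it with constant $1$. It therefore cannot be where $\cot(\theta_p)$ comes from.
\item Integrating by parts in $t$ along the solution gives $-\int_0^\infty\langle u',\phi(Au)\rangle\dd t=(p-1)\int_0^\infty\langle u\,|Au|^{p-2},Au'\rangle\dd t$ with $Au'=Af-A^2u$. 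This involves $u$ itself, $Af$ and $A^2u$, none of which is controlled by $\|f\|_p$ and $\|Au\|_p$. Moving $A$ back would require applying $A$ or $A^*$ to the nonlinear product $u\,|Au|^{p-2}$.
\item A generic polarization of a degree-$p$ form loses constants that grow with $p$, so it could not produce exactly $\cot(\theta_p)$ anyway.
\end{enumerate}

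\textbf{The missing ideas.} Integrate along the semigroup orbit rather than along the solution. Set $V(w)=\int_0^\infty\|AT_tw\|_p^p\dd t$ and $H(v,h)=\int_0^\infty\langle\phi(T_tv),AT_th\rangle\dd t$. Then $\frac{\d}{\d t}V(u(t))=p\Re H(Au,f)-\|Au\|_p^p$, hence $\int_0^\infty\|Au\|_p^p\dd t\le p\int_0^\infty\Re H(Au,f)\dd t$.

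On the diagonal, $\Re H(w,w)=\frac1p\|w\|_p^p$ \emph{exactly}. This converts the relative sector condition into the absolute bound $|\Im H(w,w)|\le\frac{\cot(\theta_p)}{p}\|w\|_p^p$.

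Next, differentiate $\Re H(v+\eps h,v+\eps h)=\frac1p\|v+\eps h\|_p^p$ and $\Im H(v+\ii\eps h,v+\ii\eps h)$ at $\eps=0$. Eliminating the common cross term gives $pH(v,h)=\langle\phi(v),h\rangle+(p-1)P'(0)$, where $P(\theta)=\Im H(\cos\theta\,v+\ii\sin\theta\,h,\cos\theta\,v+\ii\sin\theta\,h)$. Since $p$ is even, $P$ is a trigonometric polynomial of degree at most $p$, with $\sup|P|\le\cot(\theta_p)/p$ when $\|v\|_p=\|h\|_p=1$. Bernstein's inequality $|P'(0)|\le p\sup|P|$ is precisely the lossless polarization your plan lacks.

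With these tools, under exponential stability your second term equals $(p-1)\int_0^\infty P_t'(0)\dd t$, where $P_t$ is built from $v=Au(t)$ and $h=f(t)$. This is why your reduction is correct. Note also that the factor $p-1$ arises as the ratio of the radial to the tangential derivative of $\phi$ at real points, not from counting slots.
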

The role of $p$ being an even integer will become clear below. Interpolation and duality yield the following quantitative estimates for all $p \in (1,\infty)$; see Section~\ref{sec:interpolation}. The first one relies on the generic bound for $\theta_p$ obtained by Stein interpolation~\cite[Theorem~10.10]{AVV26}, whereas the second one uses an improvement specific to divergence-form operators~\cite{Chill2021, Chill2006}. For the precise definition of these operators as generators of contraction semigroups we also refer to \cite{Chill2021}.
\begin{cor}
\label{cor:Riesz-Thorin non-symmetric}
Let $-A$ generate a real, sectorially contractive semigroup on a $\sigma$-finite measure space $(\Omega,
\mu)$, and let $1<p<\infty$. Then 
\begin{align*}
    \MR_p(A) \leq 1+ \frac{(p^*)^2}{2 \theta_2}.
\end{align*}
\end{cor}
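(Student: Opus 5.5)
The plan is to combine Theorem~\ref{thm:non-symm} at even integers with a lower bound for $\theta_q$, and then to fill in the remaining exponents by Riesz--Thorin interpolation and duality.

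\emph{Angle and even exponents.} Stein interpolation \cite[Theorem~10.10]{AVV26} between contractivity on the real axis in $\L^\infty$ (angle $0$) and contractivity on $\S_{\theta_2}$ in $\L^2$ gives, for $2\le q<\infty$ with $\tfrac1q=\tfrac{1-s}{2}$, the angle $(1-s)\theta_2 = 2\theta_2/q$. Hence $\theta_q\ge 2\theta_2/q^*$. Since $\cot\theta\le 1/\theta$, Theorem~\ref{thm:non-symm} yields for every even integer $q$
\begin{align*}
  \MR_q(A)\le 1+(q-1)\frac{q}{2\theta_2} = 1+\frac{q(q-1)}{2\theta_2}.
\end{align*}

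\emph{Interpolation for $p\ge 2$.} The map $f\mapsto Au$ is a single linear operator, consistent across exponents, on $\L^p((0,\infty)\times\Omega)=\L^p((0,\infty);\L^p)$, and $\MR_p(A)$ is its norm. For $2k\le p\le 2k+2$ I take $p_0=2k$, $p_1=2k+2$ and write $\tfrac1p=\tfrac{1-t}{p_0}+\tfrac{t}{p_1}$. Riesz--Thorin then gives
\begin{align*}
  \MR_p(A)\le \MR_{p_0}(A)^{1-t}\,\MR_{p_1}(A)^{t}\le (1-t)M(p_0)+tM(p_1),
\end{align*}
where $M(q)=1+\frac{q(q-1)}{2\theta_2}$ and the second step is the weighted AM--GM inequality. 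It remains to check that this is at most $1+p^2/(2\theta_2)$. This reduces to the elementary inequality $(1-t)p_0(p_0-1)+tp_1(p_1-1)\le p^2$ on each interval, and here the $-q$ term in $q(q-1)$ supplies the slack.

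\emph{Duality for $p<2$.} The adjoint semigroup $(T_t^*)$ on $\L^2$ is again real and sectorially contractive with the same angles, since the contraction estimates dualize. A time-reversal argument on a finite horizon $(0,T)$, followed by $T\to\infty$, identifies the Banach adjoint of $f\mapsto Au$ on $\L^p$ with the corresponding operator for $A^*$ on $\L^{p'}$. Hence $\MR_p(A)=\MR_{p'}(A^*)$, and the case $p\ge 2$ applied to $A^*$ gives the bound with $p^*=p'$.

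\emph{Main obstacle.} I expect two points to need the most care. The first is the elementary verification that the interpolated quadratic bound stays below $p^2$ on each interval $[2k,2k+2]$. If the convex-combination bound turns out to be too crude there, I would instead use the log-convexity of $q\mapsto M(q)$ in the variable $1/q$ directly. The second is the rigorous justification of the duality/time-reversal identity on unbounded time intervals.
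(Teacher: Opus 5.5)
Your argument is correct. Its skeleton is the paper's: Theorem~\ref{thm:non-symm} at even integers, the Stein bound $\cot(\theta_q)\le q/(2\theta_2)$ (giving exactly the paper's $\MR_{2n}(A)\le 1+n(2n-1)/\theta_2$), Riesz--Thorin between $2k$ and $2k+2$, and duality for $p<2$. The duality step is just Lemma~\ref{lem:permanence MR}~\ref{item1:permanence MR}, so you can cite it instead of redoing the time reversal.

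The genuine difference is how you bound $M(p_0)^{1-t}M(p_1)^t$. The paper uses log-concavity of $x\mapsto 1+x^2/\theta_2$, followed by H\"older and Cauchy. That keeps more sharpness, but log-concavity fails for $x<\sqrt{\theta_2}$. This forces a separate case $n=1$, repaired with the sharper Hilbert space bound of Proposition~\ref{prop:L2}. Your weighted AM--GM step is cruder, but it removes $\theta_2$ from the comparison entirely. So you need no case split and no appeal to Proposition~\ref{prop:L2}.

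The price is that the leftover inequality is tight, so it must actually be checked rather than justified by ``the $-q$ term supplies the slack''. Set $L(t)\coloneqq(1-t)p_0(p_0-1)+tp_1(p_1-1)$. Using $p(k+1-t)=2k(k+1)$, the condition $L(t)\le p^2$ is equivalent to $D(t)\ge 0$ on $[0,1]$, where
\begin{align*}
D(t) &\coloneqq 4k^2(k+1)^2-\bigl(4k^2-2k+(8k+2)t\bigr)(k+1-t)^2\\
&= 2k^3+2(2t-1)(3t-2)\,k^2+2(1-t)(4t^2-7t+1)\,k-2t(1-t)^2 .
\end{align*}
On $[0,1]$ the three $t$-dependent coefficients are at least $-\tfrac{1}{12}$, $-\tfrac32$ and $-\tfrac{8}{27}$. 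Hence $D(t)\ge \tfrac{13}{108}k^3>0$ for all $k\ge 1$, and your route closes.

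The margin is thin. For $k=1$ and $t=\tfrac{8}{15}$ (that is, $p=\tfrac{30}{11}$) one has $p^2-L(t)=\tfrac{38}{363}$. A rougher estimate of the chord error would not suffice there.

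Your proposed fallback points the wrong way. Log-convexity of $M$ in $1/q$ yields $M(p)\le M(p_0)^{1-t}M(p_1)^t$. That is a lower bound on the Riesz--Thorin constant, not the upper bound you need.
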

\begin{cor}
\label{cor:Riesz-Thorin non-symmetric div}
Let $A = -\Div(\mu \nabla \cdot)$ be a divergence form operator with real, bounded, elliptic coefficients $\mu$ and mixed boundary conditions on an open set $\Omega \subseteq \R^n$. Let $1<p<\infty$ and let $\vartheta \in [0, \nicefrac{\pi}{2})$ be such that
\begin{align*}
    \bigl|\arg(\mu(x) \xi \cdot \overline{\xi})\bigr| \leq \vartheta, \qquad x \in \Omega,\, \xi \in \C^n \setminus \{0\}.
\end{align*}
Then 
\begin{align*}
    \MR_p(A) \leq 1+ \frac{\sqrt{1+\tan^2(\vartheta)}}{2}(p^*)^{3/2}.
\end{align*}
\end{cor}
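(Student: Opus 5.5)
The plan is to combine Theorem~\ref{thm:non-symm} with the sharper angle of sectorial contractivity that is available for divergence form operators, and then to pass from even integers to all $p\in(1,\infty)$ by duality and interpolation.

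\emph{Step 1 (reduction to $p\geq 2$).} The adjoint of $A=-\Div(\mu\nabla\cdot)$ is the divergence form operator with coefficients $\mu^{T}$ and the same mixed boundary conditions. These coefficients satisfy the same angle condition with the same $\vartheta$, since $\mu^T\xi\cdot\overline{\xi}=\overline{\mu\overline{\xi}\cdot\xi}$. By duality, $\MR_p(A)=\MR_{p'}(A^*)$: the map $f\mapsto Au$ is, up to time reversal, the adjoint of the corresponding map for $A^*$ on $\L^{p'}((0,\infty);\L^{p'})$. Hence it suffices to treat $p\geq 2$, where $p^*=p$.

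\emph{Step 2 (explicit angle and even $p$).} For real coefficients with numerical angle $\vartheta$, I take from \cite{Chill2021, Chill2006} that the semigroup is contractive on $\L^p$ on a sector of half-angle $\theta_p$ with
\begin{align*}
 \cot(\theta_p)\leq \frac{\sqrt{(p-2)^2+p^2\tan^2(\vartheta)}}{2\sqrt{p-1}}.
\end{align*}
This reduces to the Liskevich--Perelmuter angle when $\vartheta=0$. For even $p$, Theorem~\ref{thm:non-symm} then gives
\begin{align*}
 \MR_p(A)\leq 1+\frac{\sqrt{p-1}}{2}\sqrt{(p-2)^2+p^2\tan^2(\vartheta)}\leq 1+\frac{\sqrt{1+\tan^2(\vartheta)}}{2}\,\sqrt{p-1}\,p .
\end{align*}
The right-hand side is strictly below the claimed bound $1+\tfrac12\sqrt{1+\tan^2(\vartheta)}\,p^{3/2}$, because $\sqrt{p-1}\,p<p^{3/2}$.

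\emph{Step 3 (all $p\geq 2$; main obstacle).} The map $f\mapsto Au$ is one fixed linear operator on simple functions, consistent across all $\L^p((0,\infty);\L^p)$. Riesz--Thorin therefore gives
\begin{align*}
 \MR_p(A)\leq \MR_{p_0}(A)^{1-t}\,\MR_{p_1}(A)^{t}, \qquad \frac1p=\frac{1-t}{p_0}+\frac{t}{p_1}.
\end{align*}
\begin{itemize}
\item For $2<p<4$, I would interpolate between $p_0=2$, where Plancherel gives $\MR_2(A)\leq 1$ (Section~\ref{sec:L2}), and $p_1=4$.
\item For larger $p$, I would interpolate between the consecutive even integers $2k\leq p\leq 2k+2$.
\end{itemize}

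The difficulty is that $s\mapsto\log(1+c\,s^{-3/2})$ is not concave in $s=1/p$. So interpolating the bound $g(p)=1+c\,p^{3/2}$ does not automatically give $g(p)$ back, and the slack from Step 2 must be used. I would define $h(q)=1+\tfrac{c}{1}\sqrt{q-1}\,q$ with $c=\tfrac12\sqrt{1+\tan^2(\vartheta)}$, the bound actually proved at even $q$. The task is then to verify the elementary inequality $h(p_0)^{1-t}h(p_1)^t\leq 1+c\,p^{3/2}$. Since $p_1-p_0=2$ is small relative to $p$, the interpolated value exceeds $h(p)$ by only $O(c\sqrt p)$. I expect this to be absorbed by the gap $c\,(p^{3/2}-\sqrt{p-1}\,p)\approx c\sqrt p/2$, but the constants are tight and would need an explicit calculus check.

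If this check fails, the fallback is to avoid interpolation altogether. I would verify that the proof of Theorem~\ref{thm:non-symm} remains valid for all real $p\geq2$ in the divergence form setting: there the sectorial contractivity is established for every $p$ directly, not only for even integers, so the bound of Step 2 would hold for every $p\geq 2$.
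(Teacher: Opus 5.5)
\textbf{Verdict: genuine gap.} Your outline is the paper's. You reduce to $p\ge 2$ by duality and insert the angle estimate of \cite{Chill2021} into Theorem~\ref{thm:non-symm} at $p=2n$. With $c\coloneqq\sqrt{1+\tan^2(\vartheta)}$ this gives $\MR_{2n}(A)\le 1+cn\sqrt{2n-1}$, and you then interpolate by Riesz--Thorin for $2n\le p\le 2n+2$. Steps~1 and~2 are fine.

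\textbf{Step~3 is not proved.} This step carries essentially all the content of the corollary. You never show that $(1+cn\sqrt{2n-1})^{1-\lambda}(1+c(n+1)\sqrt{2n+1})^{\lambda}\le 1+\tfrac{c}{2}p^{3/2}$ uniformly in $n\ge 1$ and $\lambda\in[0,1]$, where $\nicefrac1p=\nicefrac{(1-\lambda)}{2n}+\nicefrac{\lambda}{(2n+2)}$. You only state an expectation. You are right that log-concavity in $1/p$ fails, so this does not come for free.

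\textbf{How the paper closes it.} It changes variables.
\begin{enumerate}[(i)]
\item Set $F(x)=1+cx^{3/2}$, which is log-concave for $x\ge 1$. The endpoint bounds are $F(x_0)$ and $F(x_1)$ with $x_0=(n\sqrt{2n-1})^{2/3}$ and $x_1=((n+1)\sqrt{2n+1})^{2/3}$. Hence $\MR_p(A)\le F((1-\lambda)x_0+\lambda x_1)$.
\item H\"older with exponents $\nicefrac32$ and $3$ gives $(1-\lambda)x_0+\lambda x_1\le (n+\lambda)^{2/3}(2n+2\lambda-1)^{1/3}$.
\item Substitute $p(n+1-\lambda)=2n(n+1)$ and apply $ab\le(\nicefrac a2+\nicefrac b2)^2$ to $(n+\lambda)(n+1-\lambda)$ and to $(n+\lambda-\frac12)(n+1-\lambda)$.
\item What remains is $(n+\frac12)^4(n+\frac14)^2\le n^3(n+1)^3$. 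This holds for all $n\ge1$, with margin only $\nicefrac{23}{256}$ at $n=1$.
\end{enumerate}

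\textbf{Your heuristic overestimates the loss.} The interpolation loss relative to the even-integer bound is a factor $\e^{O(n^{-2})}$. In absolute terms that is $O(c/\sqrt{p})$, far below the slack $\approx c\sqrt{p}/4$ you identified. So your direct route can be completed for large $p$, but small $n$ still needs an explicit argument.

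\textbf{The case $2<p<4$.} You cannot use $\MR_2(A)\le 1$ there; that is the self-adjoint case. Proposition~\ref{prop:L2} only gives $\MR_2(A)\le 1/\cos\vartheta=c$ here. The paper simply uses $1+c$ from Theorem~\ref{thm:non-symm} with $n=1$.

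\textbf{The fallback does not work.} The restriction to even $p$ in Theorem~\ref{thm:non-symm} has nothing to do with the availability of $\theta_p$; sectorial contractivity is assumed for every $p$ there. The identity $pH(v,h)=\langle\phi(v),h\rangle+(p-1)P'(0)$ in fact holds for all $p>2$. The evenness is needed only for Bernstein's inequality. The function $P(\theta)=\Im H(\cos(\theta)v+\ii\sin(\theta)h)$ is a trigonometric polynomial of degree at most $p$ only because $|z|^{p-2}\bar z$ is a polynomial in $z,\bar z$ when $p$ is even. For non-even $p$ nothing bounds $|P'(0)|$ by $p\sup|P|$. The divergence-form structure improves the angle but gives no substitute for this step, so you must carry out the interpolation estimate.
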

The application to divergence-form operators illustrates the strength of universal inequalities: they yield solution estimates for linear parabolic PDEs that are independent of the underlying geometry, even beyond the variational ($p=2$) setting. Another motivation for this work is the numerical analysis of nonlinear parabolic PDEs. In particular, in the derivation of conditional a posteriori error estimates, the problems considered here arise as linearizations. Conditional a posteriori error estimates rely on stability estimates with explicit constants, see, e.g., \cite{Giesselmann2026, Mizuguchi2017, Plum2019}, and the quality of the error estimators is directly linked to the size of the constants. 
\subsection{Proof strategies}
\label{sec:proof strategies}

We work directly with the parabolic PDE \eqref{eq:ACP}, using neither (vector-valued) harmonic analysis of the resolvents of $A$ nor dilation theory for $(T_t)_{t\geq 0}$. This yields particularly elementary proofs of maximal regularity beyond the Hilbert space setting.
The computations are best carried out for bounded generators of exponentially stable contraction semigroups in the case $p \geq2$. Since our inequalities are universal, we may impose these qualitative assumptions without loss of generality; see Sections~\ref{sec:approximation} and~\ref{sec:qual-ass}.

Both approaches estimate the dissipation rate of the nonlinear functional
\begin{align*}
    V(v) \coloneqq \int_0^\infty \| A T_t v\|_p^p \dd t
\end{align*}
along solutions to \eqref{eq:ACP}. This reduces maximal regularity to bounds on the nonlinear form
\begin{align*}
    H(v,h) \coloneqq \int_0^\infty \big \langle |T_t v|^{p-2} \overline{T_t v}, A T_t h \big \rangle \dd t
\end{align*}
for real-valued $v, h \in \L^p$; see Section~\ref{sec:reduction to H}. In fact, $p \Re H(Av,h)$ is the Fréchet derivative $DV(v)h$, see Lemma~\ref{lem:DV}. On the diagonal $v=h$, $\L^p$-dissipativity of $-A$ controls the integrand. The challenge is to obtain bounds off the diagonal. 

In the symmetric setting, we move $A$ onto $|T_t v|^{p-2} \overline{T_t v}$ and control $H(v,h)$ using that the modulus of $|T_t v|^{p-2} \overline{T_t v}$ is a subsolution for the parabolic operator $\frac{\d}{\d t} + A$. This strategy is carried out in Section~\ref{sec:subsolutions}. It leads to the proof of Theorem~\ref{thm:symm} under the additional assumption of positivity, which an abstract embedding theorem for real contraction semigroups removes; see Section~\ref{sec:approximation}.

In the sectorially contractive setting, our strategy is inspired by the polarization formula for bilinear forms. Namely, we find a formula that expresses the (nonlinear) form $H(v,h)$ through the variation of the diagonal form $H(v,v)$ in the purely imaginary direction $\ii h$:
\begin{align*}
        p H(v,h) = \langle |v|^{p-2}\bar{v},h\rangle + (p-1) \frac{\d}{\d \eps}\Im H(v+\ii \eps h, v+\ii \eps h)\Big|_{\eps = 0},
\end{align*}
see Section~\ref{sec:Bernstein}. For $p$ an even integer, $|z|^{p-2} \overline{z}$ is a polynomial in $z, \overline{z}$ and Bernstein's inequality allows us to control the derivative of the diagonal form through the diagonal form itself, proving Theorem~\ref{thm:non-symm}.

\subsection*{Acknowledgment} We would like to thank Sebastian Bechtel and Hannes Meinlschmidt for valuable exchange on the topic.
\subsection*{AI disclosure statement} The authors used OpenAI’s ChatGPT for language editing and consistency checks during final proofreading. The mathematical results and proofs were developed without AI assistance. The authors take full responsibility for the manuscript.
\section{Notation and background}

Most of our notation is standard. We denote the Hölder conjugate of $p \in [1,\infty]$ by $q \coloneqq \nicefrac{p}{p-1}$. We use the bilinear pairing of functions on $\Omega$ defined by 
\begin{align*}
    \langle f,g \rangle \coloneqq \int_\Omega f g \dd \mu
\end{align*}
and introduce the special function 
\begin{align*}
    \phi_p: \C \to \C, \quad z \mapsto |z|^{p-2}\bar{z}.
\end{align*}
When $p$ is clear from the context, we write $\phi = \phi_p$, so that $\|f\|_p^p = \langle \phi(f), f \rangle$.

We shall use the following well-known lemma on Nemytskii operators. It is a special case of \cite[Theorem~7]{Goldberg92}, whose proof applies equally to arbitrary measure spaces in the present setting. For a direct proof of the statement below, see \cite[Lemma 1.17]{Schnaubelt25}.
\begin{lem}
\label{lem:Nemytskii}
Let $(\Omega,\mu)$ be a measure space. Let $1<\alpha \leq r < \infty$ and let $\psi \in \Cont^1(\R^2; \R)$ be such that $\psi(0) = 0$ and $|D \psi(z)| \leq C |z|^{\alpha-1}$ for some $C$ and all $z \in \R^2$. Then the Nemytskii operator
\begin{align*}
    \Psi: \L^r(\Omega;\R^2) \to \L^{r/\alpha}(\Omega; \R), \quad v \mapsto \psi(v)
\end{align*}
is continuously Fr\'echet differentiable with derivative $ D \Psi(v) h = \nabla \psi(v) \cdot h$. In particular, the functional
\begin{align*}
    \varphi: \L^\alpha(\Omega; \R^2) \to \R, \quad v \mapsto \int_{\Omega} \psi(v) \dd \mu 
\end{align*}
is continuously Fr\'echet differentiable with derivative $D \varphi(v)h = \int_\Omega \nabla \psi(v)\cdot h \dd \mu$.
\end{lem}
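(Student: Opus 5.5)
The approach is the standard route for Nemytskii operators: identify the candidate derivative, control the Taylor remainder pointwise, and pass to norms via Hölder's inequality and Vitali's convergence theorem.

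\emph{Step 1 (well-definedness).} Since $\psi(0)=0$ and $|D\psi(z)|\le C|z|^{\alpha-1}$, the mean value theorem along the segment from $0$ to $z$ gives
\begin{align*}
|\psi(z)|\le \tfrac{C}{\alpha}|z|^\alpha .
\end{align*}
Hence $\Psi$ maps $\L^r$ into $\L^{r/\alpha}$. The candidate derivative $h\mapsto \nabla\psi(v)\cdot h$ is bounded from $\L^r$ to $\L^{r/\alpha}$. Indeed, $|\nabla\psi(v)|\le C|v|^{\alpha-1}\in \L^{r/(\alpha-1)}$, and Hölder's inequality with
\begin{align*}
\tfrac{\alpha-1}{r}+\tfrac1r=\tfrac{\alpha}{r}
\end{align*}
gives $\|\nabla\psi(v)\cdot h\|_{r/\alpha}\le C\|v\|_r^{\alpha-1}\|h\|_r$.

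\emph{Step 2 (Fréchet differentiability).} We write the remainder pointwise as
\begin{align*}
\psi(v+h)-\psi(v)-\nabla\psi(v)\cdot h=\int_0^1\bigl(\nabla\psi(v+sh)-\nabla\psi(v)\bigr)\dd s\cdot h .
\end{align*}
Hölder's inequality then bounds its $\L^{r/\alpha}$-norm by
\begin{align*}
\sup_{s\in[0,1]}\|\nabla\psi(v+sh)-\nabla\psi(v)\|_{r/(\alpha-1)}\,\|h\|_r .
\end{align*}
Both Fréchet differentiability and continuity of $D\Psi$ therefore reduce to one claim: the map $w\mapsto\nabla\psi(w)$ is continuous from $\L^r$ into $\L^{r/(\alpha-1)}$. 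If $\alpha=1$ this would be an $\L^\infty$ target, which is why the hypothesis $\alpha>1$ is needed.

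\emph{Step 3 (the continuity claim).} This is the main obstacle, and it is handled with a subsequence argument. Suppose $w_n\to w$ in $\L^r$. Pass to a subsequence with $w_n\to w$ almost everywhere and $|w_n|\le g$ for some $g\in\L^r$ (partial converse of dominated convergence). Since $\nabla\psi$ is continuous, $\nabla\psi(w_n)\to\nabla\psi(w)$ almost everywhere. Moreover,
\begin{align*}
|\nabla\psi(w_n)-\nabla\psi(w)|^{r/(\alpha-1)}\lesssim g^r+|w|^r\in\L^1 .
\end{align*}
Dominated convergence gives convergence along the subsequence, and the usual subsequence-of-subsequence argument gives convergence of the full sequence. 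No finiteness of $\mu$ is used, so arbitrary measure spaces are fine. To get uniformity in $s$, apply the claim with $w_n=v+s_nh_n$ for arbitrary $s_n\in[0,1]$ and $h_n\to0$ in $\L^r$.

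\emph{Step 4 (the functional $\varphi$).} Take $r=\alpha$, so that $\L^{r/\alpha}=\L^1$. Then $\varphi$ is the composition of $\Psi$ with the bounded linear functional $\int_\Omega\cdot\dd\mu$ on $\L^1$. The chain rule yields the stated formula for $D\varphi(v)h$ and its continuity.
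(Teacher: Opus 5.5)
Your proof is correct and is the standard argument for Nemytskii operators: the paper gives no proof of its own but defers to \cite[Theorem~7]{Goldberg92} and \cite[Lemma~1.17]{Schnaubelt25}, and your reduction --- continuous Fr\'echet differentiability of $\Psi\colon \L^r \to \L^{r/\alpha}$ follows from continuity of $w \mapsto \nabla\psi(w)$ from $\L^r$ into $\L^{r/(\alpha-1)}$ --- is exactly the mechanism behind such results. Your Step~3 also confirms the point the paper only asserts, that no finiteness of $\mu$ is needed; the one step you leave implicit, moving $\sup_{s}$ outside the $s$-integral in Step~2, is Minkowski's integral inequality, which applies because $r/(\alpha-1) > 1$.
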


\begin{rem}
Identifying $\R^2 \cong \C$, the lemma also applies to real continuously differentiable functions $\psi: \C \to \R$ with the same assumptions. In this case the scalar product becomes $v \cdot h = \Re (\overline{v}h)$.
\end{rem}
\section{The Hilbert space case}
\label{sec:L2}
In this short section, we give an explicit bound for $\MR_2(A)$ that is folklore, although we could not find a reference in the literature. The point is that this bound is sharper than those provided by Theorems~\ref{thm:symm} and \ref{thm:non-symm}  when $p=2$.

\begin{prop}
\label{prop:L2}
If $-A$ generates a real, sectorially contractive semigroup on a $\sigma$-finite measure space $(\Omega,\mu)$, then 
\begin{align*}
    \MR_2(A) \leq \frac{1}{\sin \theta_2}.
\end{align*}
In particular, $\MR_2(A) \leq 1$ if $A$ is self-adjoint.
\end{prop}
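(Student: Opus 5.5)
Since $p=2$, the space-time space $\L^2((0,\infty);\L^2)$ is a Hilbert space, and I plan to use the Fourier transform in time (Plancherel). I first extend $f$ by zero to $t<0$; the mild solution is then the convolution $u = \mathbf{1}_{(0,\infty)}T_\cdot * f$. After Fourier transform in $t$, this reads $\hat u(\tau) = (\ii\tau + A)^{-1}\hat f(\tau)$, hence $A\hat u(\tau) = A(\ii\tau+A)^{-1}\hat f(\tau)$. By Plancherel (vector-valued in the Hilbert space $\L^2$), it then suffices to show
\begin{align*}
    \sup_{\tau\in\R\setminus\{0\}} \|A(\ii\tau + A)^{-1}\|_{2\to2} \le \frac{1}{\sin\theta_2}.
\end{align*}
To make this rigorous, I will first take $f$ smooth with compact support in time and then argue by density. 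If needed, I will also replace $A$ by $A+\varepsilon$ so that the semigroup is exponentially stable and the Laplace/Fourier representation is clean, and let $\varepsilon\to0$ at the end. The limit is harmless because $A(\ii\tau+A+\varepsilon)^{-1}$ converges strongly and the bound is uniform in $\varepsilon$, with a closedness argument for $A$.

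The key step is the resolvent bound. Sectorial contractivity on $\L^2$ with angle $\theta_2$ means that $\|\e^{-zA}\|\le1$ for $z\in\S_{\theta_2}$. By the Lumer--Phillips theorem applied to $\e^{\pm\ii\theta}A$ for all $\theta<\theta_2$, this is equivalent to $\Re\langle \e^{\pm\ii\theta}Av,\bar v\rangle\ge0$. Thus the numerical range $W(A)=\{(Av|v)\}$ lies in the closed sector $\overline{\S_{\pi/2-\theta_2}}$. I then write $A(\ii\tau+A)^{-1} = 1-\ii\tau(\ii\tau+A)^{-1}$ and estimate this directly via the numerical range. Set $g=(\ii\tau+A)w$, so that $Aw = g-\ii\tau w$. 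I want $\|Aw\|\le \|g\|/\sin\theta_2$.

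The cleanest route is to estimate $\|g\|\,\|Aw\|\ge |(g|Aw)| = |\|Aw\|^2 - \ii\tau (w|Aw)|$. Since $(w|Aw)=\overline{(Aw|w)}$ lies in the reflected sector $\overline{\S_{\pi/2-\theta_2}}$, we have $\|Aw\|^2 - \ii\tau(w|Aw) = \|Aw\|^2 + \tau\, \zeta$ with $\zeta := -\ii(w|Aw)$. Here $\zeta$ lies in the sector of half-angle $\pi/2-\theta_2$ about the imaginary axis (upper or lower half, depending on signs), so its argument stays at distance at least $\theta_2$ from the real axis. Adding the positive real number $\|Aw\|^2$ to $\tau\zeta$ therefore gives modulus at least $\|Aw\|^2\sin\theta_2$. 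This is the elementary geometric fact $|a+b|\ge a\sin\theta$ for $a>0$ whenever $b$ lies in a closed cone avoiding the real axis with angle $\ge\theta$; it holds for either sign of $\tau$. Dividing by $\|Aw\|$ gives the claim.

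In the self-adjoint case the numerical range is in $[0,\infty)$, so $\theta_2=\pi/2$ and the bound is $1$. This matches the known fact that symmetric contraction semigroups are analytic of angle $\pi/2$ on $\L^2$. The main obstacle is only bookkeeping: getting the sector orientation right in the geometric inequality, and justifying the Fourier representation of the mild solution together with $u(t)\in\dom(A)$ a.e. The latter follows since $\hat u(\tau)\in\dom(A)$ with $A\hat u\in\L^2(\R;\L^2)$, and $A$ is closed.
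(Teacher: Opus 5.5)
Your proposal is correct and follows essentially the paper's route. Plancherel reduces the claim to $\sup_{s\neq 0}\|A(\ii s+A)^{-1}\|_{2\to 2}\le 1/\sin\theta_2$, and the Lumer--Phillips theorem places the numerical range of $A$ in the closed sector of half-angle $\nicefrac{\pi}{2}-\theta_2$, i.e.\ $|\Im z|\le\cos(\theta_2)|z|$ for $z=\langle Au,\bar u\rangle$. The only difference is the last elementary step: you pair $g=(\ii\tau+A)w$ with $Aw$ and use $|a+b|\ge a\sin\theta_2$, whereas the paper expands $\|f-\ii s u\|_2^2$ and absorbs the cross term $2s\Im z$ by Cauchy--Schwarz and Young. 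The sign in front of $\ii\tau(w|Aw)$ depends on your inner-product convention but, as you note, does not matter.
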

\begin{proof}
    By Plancherel's Theorem, we know that 
    \begin{align*}
        \MR_2(A) \leq \sup_{s \in \R \setminus \{0\}} \|A(\ii s + A)^{-1}\|_{2 \to 2}, 
    \end{align*}
    see \cite[Corollary 17.3.8]{HvNVW3}.
    Thus, it suffices to take $f \in \L^2$ and $s \in \R \setminus \{0\}$, set $u\coloneqq (\ii s + A)^{-1}f$ and prove $\|A u\|_2 \leq (\sin \theta_2)^{-1} \|f\|_2$. 
    
    To this end, we abbreviate $z \coloneqq \langle A u , \overline{u} \rangle$. The Lumer--Phillips theorem yields $|\arg(z)| \leq \nicefrac{\pi}{2} - \theta_2$. Consequently, we have
    \begin{align}
    \label{ineq:Imz2}
        |\Im z| \leq \cos(\theta_2)|z|,
    \end{align}
    from which we obtain
    \begin{align*}
        \|Au\|_2^2 = \|f - \ii s u\|_2^2 &= \|f\|_2^2 + \|s u\|_2^2 + 2\Re \langle f, - \overline{\ii s u}  \rangle \\
        &= \|f\|_2^2 + \|s u\|_2^2 + 2\Re \langle \ii s u + Au, \ii s \bar{u}  \rangle \\
        &= \|f\|_2^2 - \|s u\|_2^2 - 2s\Im z.
    \end{align*}
    Using \eqref{ineq:Imz2} and the Cauchy--Schwarz inequality, we conclude
    \begin{align*}
        \|su\|_2^2 + \|Au\|_2^2 
        \leq \|f\|_2^2 + \|\cos(\theta_2) Au\|_2^2 + \|s u\|_2^2,
    \end{align*}
    which yields the desired bound
    \begin{align*}
        \|A u\|_2^2 \leq \frac{1}{1-\cos^2(\theta_2)}\|f\|_2^2 = \frac{1}{\sin^2(\theta_2)}\|f\|_2^2. &\qedhere
    \end{align*}
\end{proof}
\begin{rem}
    The proof above applies \emph{in extenso} to all $m$-sectorial operators $A$ on a Hilbert space; see \cite[Section 3]{AVV26} for background.
\end{rem}
\section{Four lemmas for real contraction semigroups}
\label{sec:approximation}

This section contains four lemmas for real contraction semigroups. Their sole purpose is to justify the qualitative assumptions introduced in the next section, which will facilitate the proofs of our main results. Readers can skip this section on a first reading. 

\begin{defn}
\label{def:Yosida type}
If $-A$ generates a real contraction semigroup on a $\sigma$-finite measure space $(\Omega,\mu)$, we define the Yosida-type approximants 
\begin{align*}
    A_n \coloneqq n A (n+A)^{-1}+\frac{1}{n}, \quad n \in \N.
\end{align*}
\end{defn}

The following lemma shows that the additional assumptions in our main results are preserved under the passage from $A$ to one of the bounded operators $A_n$.

\begin{lem}
\label{lem:yosida}
    Let $-A$ generate a real contraction semigroup $(T_t)_{t \geq 0}$ on a $\sigma$-finite measure space $(\Omega,
    \mu)$ and let $n \in \N$. Then $-A_n$ generates a real contraction semigroup $\smash{(T_t^{(n)})_{t \geq 0}}$ on $(\Omega,\mu)$ and for every $p \in [1,\infty]$ we have
    \begin{align*}
        \|T_t^{(n)}\|_{p \to p} \leq \e^{-t/n}, \qquad t \geq 0.
    \end{align*}
    Moreover, the following properties pass on from $(T_t)_{t \geq 0}$ to $(T_t^{(n)})_{t \geq 0}$:
    \begin{enumerate}[(a)]
        \item\label{item1:yosida} positivity,
        \item\label{item2:yosida} symmetry,
        \item\label{item3:yosida} sectorial contractivity of angle $\theta_p \leq \arccos|1 - \nicefrac{2}{p}|$.
    \end{enumerate}
\end{lem}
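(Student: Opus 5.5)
The plan is to write the Yosida approximation as $nA(n+A)^{-1} = n - n^2(n+A)^{-1}$ and to express the new semigroup through the resolvent, which is itself a Laplace transform of $(T_t)_{t\geq0}$. On $\L^2$ the operator $-A_n = -n - \tfrac1n + n^2 R_n$, with $R_n \coloneqq (n+A)^{-1}$, is bounded. Hence
\begin{align*}
    T_t^{(n)} = \e^{-t/n}\,\e^{-nt}\,\e^{tn^2 R_n} = \e^{-t/n}\,\e^{-nt}\sum_{k\geq0}\frac{(tn^2)^k}{k!}R_n^k .
\end{align*}
The whole argument rests on the formula $R_n = \int_0^\infty \e^{-ns}T_s\dd s$. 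Every property of $(T_t)_{t\geq0}$ that is stable under nonnegative linear combinations and strong limits then passes to $nR_n$, and from there to $\e^{-nt}\e^{tn^2R_n}$.

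First, by \eqref{eq:contraction property}, $\|nR_n f\|_p \leq \|f\|_p$ for all $p\in[1,\infty]$ and $f\in \L^2\cap\L^p$. Therefore $\|\e^{-nt}\e^{tn^2R_n}\|_{p\to p} \leq \e^{-nt}\e^{nt} = 1$, and the factor $\e^{-t/n}$ gives the claimed decay. For $1\leq p<\infty$ the operators extend consistently, and for $p=\infty$ we use the bound on $\L^2\cap\L^\infty$. Realness follows because $R_n$ is a Bochner integral of real operators with a real weight, so it maps real functions to real functions, and the exponential series preserves this. Strong continuity on $\L^2$ is automatic since $A_n$ is bounded, and analyticity holds for the same reason. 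This shows that $-A_n$ generates a real contraction semigroup.

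The listed properties are handled one by one.

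\textbf{(a) Positivity.} $R_n$ is a positive operator as an integral of positive operators. The series has nonnegative coefficients, so $T_t^{(n)}$ is positive.

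\textbf{(b) Symmetry.} $A$ self-adjoint implies $R_n$ self-adjoint, so $A_n$ is bounded and self-adjoint and $T_t^{(n)}$ is symmetric.

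\textbf{(c) Sectorial contractivity.} Fix $p$ and assume $(T_z)$ is contractive on $\L^p$ for $z\in\S_{\theta_p}$. The plan is to show that $\e^{-zA_n}$ is contractive for $z\in\S_{\theta_p}$. The factor $\e^{-z/n}$ has modulus at most $1$, so it suffices to treat $-nA(n+A)^{-1}$.

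This is the main obstacle, since for complex $z$ the crude series estimate only gives $|\e^{-nz}|\e^{|z|n}\neq 1$. I would instead use Lumer--Phillips dissipativity. Contractivity of $\e^{-zA}$ on $\S_{\theta_p}$ is equivalent to $\e^{\pm\ii\theta}A$ being accretive on $\L^p$ for $|\theta|<\theta_p$, i.e.\ $\Re\langle \e^{\pm \ii\theta}Au,\phi_p(u)\rangle\geq 0$; equivalently, $\|(1+\lambda\e^{\ii\theta}A)^{-1}\|_{p\to p}\leq 1$ for $\lambda>0$. Accretivity of a bounded operator $B$ on $\L^p$ means $\|\e^{-tB}\|\leq1$, which in turn is equivalent to $\|1-\eps B\|\leq 1+o(\eps)$.

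To apply this, set $B \coloneqq \e^{\ii\theta}nA(n+A)^{-1} = \e^{\ii\theta}(n - n^2 R_n)$. For $|\theta|<\theta_p$ I would bound $\|1-\eps B\|$ by the functional calculus identity
\begin{align*}
    1-\eps\e^{\ii\theta}n(1-nR_n) = (1-\eps\e^{\ii\theta}n) + \eps \e^{\ii\theta}n\cdot nR_n ,
\end{align*}
which only yields $|1-\eps\e^{\ii\theta}n| + \eps n = 1+\eps n(1-\cos\theta) + \eps n + O(\eps^2)$. That is not sufficient.

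I would therefore argue via resolvents instead. Accretivity of $\e^{\ii\theta}A$ is preserved under the map $A \mapsto A(n+A)^{-1}$, because for $u = (n+A)^{-1}v$ one has
\begin{align*}
    \Re\langle \e^{\ii\theta}A u, \phi_p(v)\rangle = \Re\langle \e^{\ii\theta}Au,\phi_p(nu+Au)\rangle .
\end{align*}
This is not obviously nonnegative on $\L^p$, and I expect this to be the delicate point. The fallback is to use Phillips' subordination for Bernstein functions. The function $f(\lambda)=n\lambda/(n+\lambda)$ is a complete Bernstein function, and complete Bernstein functions map the sector $\S_{\pi/2-\theta}$ into itself. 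Hence $f(A)$ is sectorially accretive of the same angle, and $\e^{-zf(A)}$ is obtained from $\e^{-wA}$, $w\in\S_{\theta_p}$, by integration against a probability measure along rays. The cleanest route here is the complex-time subordination formula for the exponent $z$ rotated by $\e^{\ii\theta}$, which gives $\|\e^{-zf(A)}\|_{p\to p}\leq 1$.

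Finally, the restriction $\theta_p\leq\arccos|1-2/p|$ is only needed to stay in the admissible range, and it is unaffected by the argument.
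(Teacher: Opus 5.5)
Your treatment of generation, the decay bound, realness, positivity and symmetry matches the paper's argument: resolvent as a Laplace transform, the exponential series, and self-adjointness of $(n+A)^{-1}$. \emph{Part (c), however, is not proved.} You correctly reduce it via Lumer--Phillips to showing $|\arg\langle A_n f,\phi(f)\rangle|\le\nicefrac{\pi}{2}-\theta_p$. You also correctly locate the difficulty in $\langle Au,\phi(nu+Au)\rangle$. But you leave it open, and the subordination fallback does not close it:
\begin{enumerate}[(i)]
\item Bochner subordination $\e^{-tg(A)}=\int_0^\infty \e^{-sA}\,\eta_t(\d s)$ with sub-probability measures $\eta_t$ holds for real $t\ge 0$ only. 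To rotate it you would need $\mu\mapsto \e^{\ii\theta}g(\e^{-\ii\theta}\mu)$ to be a Bernstein function. For $g(\lambda)=n\lambda/(n+\lambda)$ this is $n\mu/(n+\e^{-\ii\theta}\mu)$, which is not even real on $(0,\infty)$.
\item Rotating the Gamma-subordination integrals for $(n+A)^{-k}$ onto a ray of angle $\psi$ fails too. It only yields $\|\e^{-t\e^{\ii\theta}nA(n+A)^{-1}}\|_{p\to p}\le \e^{tn(1/\cos\psi-\cos\theta)}$, which exceeds $1$.
\item That complete Bernstein functions map sectors into themselves gives spectral sectoriality of $g(A)$. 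It does not show that the values $\langle g(A)f,\phi(f)\rangle$ lie in the closed sector, which is what Lumer--Phillips requires. For $p\neq 2$ the duality map $\phi$ is nonlinear, so this cannot come from spectral information.
\end{enumerate}

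The missing idea is a splitting of the duality map. Set $u\coloneqq n(n+A)^{-1}f$. Then $Au=n(f-u)$ and $A_nf=n(f-u)+\frac1n f$. Writing $\phi(f)=\phi(u)+(\phi(f)-\phi(u))$ gives
\[
\langle A_nf,\phi(f)\rangle = n\langle f-u,\phi(f)-\phi(u)\rangle+\langle Au,\phi(u)\rangle+\tfrac1n\|f\|_p^p .
\]
The three terms are handled as follows:
\begin{enumerate}[(i)]
\item The first term is controlled by the pointwise Liskevich--Perelmuter inequality
\[
|\Im((a-b)(\phi(a)-\phi(b)))|\le \tfrac{|p-2|}{2\sqrt{p-1}}\,\Re((a-b)(\phi(a)-\phi(b))).
\]
This places it in the closed sector of half-angle $\nicefrac{\pi}{2}-\arccos|1-\nicefrac{2}{p}|$.
\item The second term lies in $\overline{\S_{\pi/2-\theta_p}}$ by the hypothesis on $A$.
\item The third term is nonnegative.
\end{enumerate}
All three lie in the convex cone $\overline{\S_{\pi/2-\theta_p}}$ because $\theta_p\le\arccos|1-\nicefrac{2}{p}|$. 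So your closing remark that this restriction is unaffected by the argument is mistaken: it is exactly the input that makes (c) work.
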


\begin{proof}
    By definition and the Laplace transform, we can write
    \begin{align}
        A_n 
        &= n + \frac{1}{n} - n^2(n+A)^{-1},  \label{eq:An} \\
        (n+A)^{-1} 
        &= \int_0^\infty \e^{-nt}T_t \dd t.\label{eq:resA}
    \end{align}
    This shows that $(n+A)^{-1}$ is real and that $\|(n+A)^{-1}\|_{p \to p} \leq \nicefrac{1}{n}$ holds for every $p \in [1,\infty]$. The associated semigroup is given as the operator exponential function
    \begin{align}
    \label{eq:series}
        T_t^{(n)} = \e^{-t/n} \e^{-nt} \sum_{k=0}^\infty \frac{(tn^2(n+A)^{-1})^k}{k!} .
    \end{align}
    Thus, $(T_t^{(n)})_{t \geq 0}$ is real and satisfies the claimed decay estimate. In particular, it is a real contraction semigroup on $(\Omega,\mu)$. Assertion \ref{item1:yosida} then follows from \eqref{eq:resA} and \eqref{eq:series}. Symmetry of the semigroup is equivalent to self-adjointness of the generator. Thus, \ref{item2:yosida} follows from \eqref{eq:An}. 
    
    As for \ref{item3:yosida} we let $f \in \L^p$ and set $z \coloneqq \langle A_n f, \phi(f) \rangle$. By the Lumer--Phillips theorem,
    it suffices to check that $|\arg(z)| \leq \nicefrac{\pi}{2} - \theta_p$. To this end,  we set $u\coloneqq n(n+A)^{-1}f$, so that by \eqref{eq:An} we get
    \begin{align*}
        z 
        &= \big \langle n(f-u) + \tfrac1n f, \phi(f) \big \rangle \\
        &=  \big \langle n( f-u) + \tfrac{1}{n}f, \phi(f) - \phi(u) \big \rangle + \big \langle Au + \tfrac1n f,\phi(u) \big \rangle \\
        &= n \big \langle f - u , \phi(f) - \phi(u) \big \rangle + \big \langle Au, \phi(u) \big \rangle + \tfrac1n \big \langle f , \phi(f) \big \rangle \\
        &\eqqcolon z_1 + z_2 + \tfrac1n \|f\|_p^p.
    \end{align*}
    Again by Lumer--Phillips, we have $|\arg(z_2)| \leq \nicefrac{\pi}{2} - \theta_p$. As for $z_1$, the Liskevich--Perelmuter inequality~\cite[Lemma 2]{Haase2016} yields  $|\arg(z_1)| \leq \nicefrac{\pi}{2} - \arccos| 1 - \nicefrac{2}{p} |$.
    Since $\theta_p \leq \arccos | 1- \nicefrac{2}{p}|$ by assumption,  we conclude $|\arg(z)| \leq \nicefrac{\pi}{2} - \theta_p$ as desired.
\end{proof}

A real, symmetric contraction semigroup can be embedded into a positive, symmetric contraction semigroup in the following sense. The construction has been inspired by results in \cite{Haase2014}. It uses the modulus semigroup, whose properties needed below can all be found in \cite[Section~2.4]{Ouhabaz2005}.

\begin{lem}
\label{lem:smg embedding}
Let $(T_t)_{t \geq 0}$ be a real, symmetric contraction semigroup on a $\sigma$-finite measure space $(\Omega, \mu)$. Then there is a positive, symmetric contraction semigroup $(S_t)_{t \geq 0}$ on another $\sigma$-finite measure space $(\Sigma, \lambda)$ and a linear map $j$ that is an isometry $j: \L^p(\Omega,\mu) \to \L^p(\Sigma, \lambda)$ for every $p \in [1,\infty]$ such that
\begin{align}
\label{eq:j intertwines}
    j(T_t v) = S_t j(v), \quad t \geq 0,\, v \in \L^2(\Omega, \mu).
\end{align}
\end{lem}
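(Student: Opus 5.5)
We realise the embedding through the modulus semigroup and a doubling of the measure space, splitting functions into positive and negative parts. Since $(T_t)$ is real, each $T_t$ maps real functions to real functions. Since it is $\L^1$- and $\L^\infty$-contractive, it has a modulus $|T_t|$, and by \cite[Section~2.4]{Ouhabaz2005} the family $(|T_t|)_{t\ge0}$ is dominated by a positive semigroup; more importantly, it gives the decomposition $T_t = T_t^+ - T_t^-$ with $T_t^\pm \ge 0$ and $T_t^+ + T_t^- = |T_t|$.

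The plan is as follows. Set $\Sigma \coloneqq \Omega \times \{1,2\}$ with $\lambda$ equal to two copies of $\mu$, so that $\L^p(\Sigma,\lambda) = \L^p(\Omega)\oplus\L^p(\Omega)$ with the $\ell^p$-sum norm. Define
\begin{align*}
j(v) \coloneqq 2^{-1/p}(v,-v).
\end{align*}
This depends on $p$, which is problematic, so instead we use $j(v) = (v^+, v^-)$ for real $v$. That map is not linear either. We therefore take the linear choice $j(v) \coloneqq (v, -v)$ and renormalise the measure: give each copy of $\Omega$ half of $\mu$, i.e. $\lambda \coloneqq \tfrac12(\mu\otimes\delta_1 + \mu\otimes\delta_2)$. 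Then
\begin{align*}
\|j v\|_{\L^p(\lambda)}^p = \tfrac12\|v\|_p^p + \tfrac12\|v\|_p^p = \|v\|_p^p
\end{align*}
for all $p$, and the identity holds for $p=\infty$ trivially. So $j$ is a linear isometry for every $p\in[1,\infty]$ simultaneously, on complex functions as well.

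Next we define $S_t$ as the block operator
\begin{align*}
S_t \coloneqq \begin{pmatrix} T_t^+ & T_t^- \\ T_t^- & T_t^+ \end{pmatrix}.
\end{align*}
Positivity is clear. The intertwining \eqref{eq:j intertwines} holds because
\begin{align*}
S_t(v,-v) = (T_t^+v - T_t^-v,\; T_t^-v - T_t^+v) = (T_tv, -T_tv).
\end{align*}
Symmetry on $\L^2(\lambda)$ follows from symmetry of $T_t^\pm$: these are the positive and negative parts of a self-adjoint operator, and taking adjoints commutes with the lattice operations.

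Contractivity on $\L^\infty$ reduces to $\||T_t|\|_{\infty\to\infty}\le\|T_t\|_{\infty\to\infty}\le 1$, since $S_t(f_1,f_2)$ is dominated componentwise by $|T_t|(|f_1|\vee|f_2|)$. Contractivity on $\L^1$ follows by duality and symmetry, and on $\L^p$ by Riesz--Thorin.

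\textbf{Main obstacle.} The hardest point is the semigroup property $S_{t+s}=S_tS_s$ together with strong continuity. The semigroup law requires $T_{t+s}^\pm = T_t^+T_s^\pm + T_t^-T_s^\mp$, which is false in general: the lattice parts $T_t^\pm$ are not multiplicative. The fix we would try first is to define $S_t$ not from $T_t^\pm$ directly but through the generator. Write the form of $A$ on $\L^2$ via the Beurling--Deny decomposition, split the off-diagonal (jump/killing) part into positive and negative contributions, and let the negative contribution couple the two sheets of $\Sigma$. Equivalently, define $-B$ on $\L^2(\lambda)$ as the generator of the form
\begin{align*}
b((f_1,f_2),(g_1,g_2)),
\end{align*}
chosen so that $b(jv,jw)=\langle Av,w\rangle$ and so that $b$ satisfies the first and second Beurling--Deny criteria. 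The Beurling--Deny criteria then give a positive, $\L^\infty$-contractive, symmetric semigroup. Intertwining follows because $j(\L^2)$ is invariant under the resolvents of $B$ and $B$ restricts there to $jAj^{-1}$; one checks this via resolvents by uniqueness of solutions.

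Making this construction rigorous in the abstract measure-space setting, perhaps by approximating with the bounded operators $A_n$ of Lemma~\ref{lem:yosida} and using the resolvent $(n+A)^{-1}$, whose positive and negative parts are well defined, is where the real work lies. For bounded $A_n = c - n^2 R$ with $R=(n+A)^{-1}$, we can simply take
\begin{align*}
B_n \coloneqq c - n^2\begin{pmatrix} R^+ & R^- \\ R^- & R^+\end{pmatrix}
\end{align*}
and exponentiate, which settles the bounded case cleanly.
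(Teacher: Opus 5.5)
\textbf{There is a genuine gap, at the point you flag yourself.} With $T_t^\pm = \tfrac12(|T_t| \pm T_t)$ your block operator is $S_t = U \begin{pmatrix} |T_t| & 0 \\ 0 & T_t \end{pmatrix} U^{-1}$, where $U = \tfrac{1}{\sqrt 2}\begin{pmatrix} 1 & 1 \\ 1 & -1 \end{pmatrix}$. Hence $(S_t)$ is a semigroup if and only if $(|T_t|)$ is one, and that fails in general.

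Neither proposed repair closes the gap. The Beurling--Deny representation (diffusion, jump and killing parts) is a structure theorem for regular Dirichlet forms on locally compact spaces. The form of a merely real, non-positive contraction semigroup on an abstract $\sigma$-finite measure space has no such representation. Moreover, you never specify the form $b$, verify the Beurling--Deny criteria for it, or check invariance of $j(\L^2)$.

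Your bounded construction with $B_n$ is correct, but it only embeds the approximants $(T_t^{(n)})$, not $(T_t)$ itself. Passing to $n \to \infty$ would require showing that $\e^{-tB_n}$ converges strongly to a semigroup, and that is essentially the whole difficulty. Reordering the paper's reductions (first Lemma~\ref{lem:permanence MR}\,\ref{item4:permanence MR}, then embedding each $A_n$) would suffice for the application in Lemma~\ref{lem:MR embedding}. It does not prove the lemma as stated.

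\textbf{The missing idea is one you mention in your first paragraph and then drop:} the modulus semigroup $(T_t^\odot)_{t \geq 0}$ from \cite[Section~2.4]{Ouhabaz2005}. This is a positive, symmetric contraction semigroup with $|T_t v| \leq T_t^\odot |v|$. Replace the single-operator modulus $|T_t|$ by $T_t^\odot$ in your block matrix:
\[
S_t \coloneqq \tfrac12 \begin{pmatrix} T_t^\odot + T_t & T_t^\odot - T_t \\ T_t^\odot - T_t & T_t^\odot + T_t \end{pmatrix} = U \begin{pmatrix} T_t^\odot & 0 \\ 0 & T_t \end{pmatrix} U^{-1}.
\]
Then the following hold.
\begin{itemize}
\item The semigroup law and strong continuity are inherited from the two semigroups through conjugation by $U$.
\item The entries are positive: for $v \geq 0$ you get $T_t^\odot v \pm T_t v \geq T_t^\odot v - |T_t v| \geq 0$.
\item Intertwining with $j(v) = (v,-v)^\top$ and symmetry go through exactly as in your argument.
\item For contractivity on $\L^\infty$, take $v = \max(|w_1|,|w_2|)$; positivity gives $|S_t w| \leq S_t (v,v)^\top = (T_t^\odot v, T_t^\odot v)^\top$. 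Contractivity on $\L^1$ then follows by symmetry, and on $\L^p$ by Riesz--Thorin.
\end{itemize}
This is exactly the paper's proof.
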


\begin{proof}
We consider the disjoint union $\Sigma \coloneqq \Omega \sqcup \Omega$ and identify functions $w: \Sigma \to \C$ canonically with a pair of functions $(w_1, w_2)^\top: \Omega \to \C^2$. The measure on $\Sigma$ is given by
    \begin{align*}
        |w| \dd \lambda = \frac{1}{2} |w_1| \dd \mu + \frac{1}{2} |w_2| \dd \mu.
    \end{align*}
Then $j(v) \coloneqq (v,-v)^\top$ defines an isometry of the respective $\L^p$-spaces for every $p \in [1,\infty]$. Let $(T_t^\odot)_{t \geq 0}$ be the modulus semigroup of $(T_t)_{t \geq 0}$: This is a positive, symmetric contraction semigroup on $(\Omega,\mu)$ with the property that $|T_tv| \leq T_t^\odot |v|$ for all $v \in \L^2$. On $\L^2(\Sigma, \lambda)$ we define the symmetric operators
    \begin{align*}
        S_t  
        \coloneqq \frac{1}{2} \begin{pmatrix}
            T_t^\odot + T_t& T_t^\odot - T_t \\ T_t^\odot - T_t & T_t^\odot + T_t
        \end{pmatrix}, \qquad t \geq 0,
    \end{align*}
so that \eqref{eq:j intertwines} follows by inspection. It remains to show that $(S_t)_{t \geq 0}$ is a positive contraction semigroup on $(\Sigma, \lambda)$.

The family $(S_t)_{t \geq 0}$ is strongly continuous since the two semigroups are. The semigroup property becomes obvious once we realize that 
    \begin{align*}
        S_t = U \begin{pmatrix} T_t^\odot & 0 \\ 0 & T_t \end{pmatrix} U^{-1}, \quad U \coloneqq \frac{1}{\sqrt{2}} \begin{pmatrix} 1 & 1 \\ 1 & -1 \end{pmatrix}.
    \end{align*}
Now, fix $t \geq 0$. For $0 \leq v \in \L^2(\Omega, \mu)$ we have $|T_t v| \leq T_t^\odot v$ and therefore $(T_t^\odot \pm T_t)v \geq 0$. Hence, all four entries in the matrix representation of $S_t$ are positive. If $0 \leq w \in \L^2(\Sigma, \lambda)$, then we can apply this observation to $v=w_1, w_2$ in order to conclude that $S_t w \geq 0$. Consequently, $S_t$ is positive. Similarly, for $w \in \L^2(\Sigma,\lambda) \cap \L^\infty(\Sigma,\lambda)$ we set $v \coloneqq \max(|w_1|,|w_2|)$ and obtain $|S_t w| \leq S_t(v,v)^\top$ by positivity. Since $ S_t(v,v)^\top = (T_t^\odot v, T_t^\odot v)^\top$, we conclude
    \begin{align*}
        \|S_t w\|_\infty \leq \|T_t^\odot v\|_\infty \leq \|v\|_\infty = \|w\|_\infty,
    \end{align*}
that is, the contraction property $\|S_t\|_{p \to p} \leq 1$ for $p=\infty$. Since $S_t$ is symmetric, we also get the contraction property for $p=1$ and then for all $p \in [1,\infty]$ by the Riesz--Thorin theorem.
\end{proof}
We use the preceding two lemmas to discuss suitable permanence properties of maximal regularity, that is, how the estimate \eqref{eq:MR} for some operators $A$, right-hand sides $f$ and exponents $p$ can be transferred to broader classes. For $f$ we use the following class, which is dense in $\L^p((0,\infty);\L^p)$ for every $p \in (1,\infty)$.

\begin{defn}
\label{def:tensor functions}
We denote by $\Cont^1_c(0,\infty) \otimes (\L^1 \cap \L^\infty)$ the class of continuously differentiable functions with compact support on $(0,\infty)$ and finite dimensional range in $\L^1 \cap \L^\infty$.
\end{defn}

\begin{lem}
\label{lem:permanence MR}
Let $-A$ generate a real contraction semigroup $(T_t)_{t \geq 0}$ on a $\sigma$-finite measure space $(\Omega,\mu)$. Let $p \in (1,\infty)$ and, given $f \in \L^p((0,\infty); \L^p)$, let $u$ be the mild solution to the abstract Cauchy problem \eqref{eq:ACP}. 

Let $C \geq 0$ be a constant. Each of the following properties implies that $\MR_p(A) \leq C$.
    \begin{enumerate}[(a)]
    \item\label{item1:permanence MR} $\MR_{q}(A^*) \leq C$, 
    \item\label{item2:permanence MR} \eqref{eq:MR} holds for $f \in \Cont^1_c(0,\infty) \otimes (\L^1 \cap \L^\infty)$,  
    \item\label{item3:permanence MR} \eqref{eq:MR} holds for all real-valued $f \in \Cont^1_c(0,\infty) \otimes (\L^1 \cap \L^\infty)$,
    \item\label{item4:permanence MR} $\MR_p(A_n) \leq C$ for all $n \in \N$.
    \end{enumerate}
\end{lem}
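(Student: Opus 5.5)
We prove the four implications separately. Each is a density, duality or limiting argument that uses the definition of $\MR_p$ as the best constant in \eqref{eq:MR}, together with the closedness of $A$.

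\emph{Part (a): duality.} The adjoint semigroup $(T_t^*)_{t\ge0}$ is again a real contraction semigroup; its $\L^q$-realization is the dual of $T_t$ on $\L^p$ with respect to the bilinear pairing $\langle\cdot,\cdot\rangle$, and its generator is $-A^*$. We argue on $\L^p((0,\infty);\L^p)$. For nice $f$ and $g$ we want
\begin{align*}
\int_0^\infty \langle Au(t), g(t)\rangle \dd t = \int_0^\infty \langle f(t), A^*w(t)\rangle \dd t,
\end{align*}
where $w(t) = \int_t^\infty T^*_{s-t} g(s)\dd s$ is the backward mild solution. This follows from Fubini and $\langle AT_{t-s}f(s), T^*_{\tau-t}g(\tau)\rangle = \langle f(s), A^*T^*_{\tau-s}g(\tau)\rangle$, after integrating in $t$. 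The reflection $t\mapsto R-t$ on intervals $(0,R)$ turns $w$ into a forward solution for $A^*$ with the same norms. Hence $\MR_q(A^*)\le C$ gives $|\int\langle Au,g\rangle|\le C\|f\|_p\|g\|_q$. Taking the supremum over a dense class of $g$ gives the bound on $\|Au\|_p$. To avoid domain issues, we first derive this for the regular $f$ of part (b) and then apply (b).

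\emph{Part (b): density.} For $f\in\Cont^1_c(0,\infty)\otimes(\L^1\cap\L^\infty)$ the mild solution is a classical solution, and $u\in\dom(A)$ pointwise. For general $f$, choose $f_k$ from this class with $f_k\to f$ in $\L^p((0,\infty);\L^p)$. Young's inequality on bounded time intervals gives $u_k\to u$ in $\L^p((0,R);\L^p)$ for every $R$, while $(Au_k)$ is Cauchy by the hypothesis. Passing to an a.e.\ convergent subsequence and using that $A$ is closed, we get $u(t)\in\dom(A)$ for a.e.\ $t$, $Au_k\to Au$, and the bound with constant $C$. \emph{Part (c)} follows from (b) because the semigroup is real, so $u$ splits into the solutions for $\Re f$ and $\Im f$. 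Pointwise, $|Au|^p=(|A u_1|^2+|Au_2|^2)^{p/2}$. Here we use the standard fact that a real operator has the same norm on complex $\L^p$ as on real $\L^p$ (Marcinkiewicz–Zygmund, with constant one for positive-definite averaging). Concretely, we apply the real estimate to $\cos\alpha\, \Re f+\sin\alpha\,\Im f$ and average over $\alpha\in[0,2\pi]$. The identity $\frac{1}{2\pi}\int_0^{2\pi}|a\cos\alpha+b\sin\alpha|^p\dd\alpha=c_p(a^2+b^2)^{p/2}$ then yields the complex estimate with the same constant.

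\emph{Part (d): Yosida approximation.} By (b), it suffices to take $f$ regular. Let $u_n$ be the solutions for $A_n$. We have $T^{(n)}_t\to T_t$ strongly, locally uniformly in $t$, because $A_n\to A$ in the strong resolvent sense (the shift $\frac1n$ is harmless). Hence $u_n(t)\to u(t)$. Moreover, $A_nu_n=f-u_n'$, and $u_n'(t)=\int_0^t T^{(n)}_{s}f'(t-s)\dd s$ converges to $u'(t)$ because $f$ is $\Cont^1$ with compact support. So $A_nu_n\to f-u'=Au$ pointwise in $\L^p$ with a dominating bound on bounded time intervals. Fatou's lemma then transfers $\|A_nu_n\|_p\le C\|f\|_p$ to $A$. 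The main obstacle is (a): making the duality rigorous on the unbounded time interval and identifying the $\L^q$-generator with $A^*$. We handle this by working on $(0,R)$ and letting $R\to\infty$.
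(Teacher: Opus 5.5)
Your proposal is correct. For (c) it is exactly the paper's argument: averaging the real estimate over $\Re(\e^{-\ii\alpha}f)=\cos\alpha\,\Re f+\sin\alpha\,\Im f$ is the Zygmund identity the paper invokes.

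For (a) and (b) the paper gives no proof and simply cites \cite{HvNVW3}; your sketches are the standard duality and density arguments. One caveat in (a): the Fubini step near the diagonal $s=t$ needs more than you wrote, because the bound $\|AT_r\|_{p\to p}\lesssim 1/r$ alone does not give absolute integrability. For regular $f,g$ it is cleaner to proceed as follows. Write $Au=f-u'$, where $u'$ is the mild solution for $f'$. Move the solution operator onto $g$ by Fubini, which is harmless because only the contractions $T_r$ appear. Then integrate by parts in time against the backward solution $w$, using $f(0)=0$, $w(R)=0$ and $-w'+A^*w=g$.

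The genuine difference is in (d).
\begin{itemize}
\item \emph{Your route.} You use the time regularity of $f$. Since $f(0)=0$, we have $u_n'(t)=\int_0^tT^{(n)}_sf'(t-s)\dd s\to\int_0^tT_sf'(t-s)\dd s=u'(t)$. Hence $A_nu_n=f-u_n'\to f-u'=Au$ in $\L^p$ for every $t$, even uniformly on compact time intervals, and Fatou's lemma transfers the bound.
\item \emph{The paper's route.} By reflexivity it extracts a subsequence with $A_nu_n\rightharpoonup g$ weakly in $\L^p((0,T);\L^p)$. It introduces $w_n=n(n+A)^{-1}u_n$, so that $w_n\to u$ and $Aw_n\rightharpoonup g$. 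It identifies $g=Au$ through the weakly closed graph of $A$ and concludes by weak lower semicontinuity of the norm.
\end{itemize}
Your route is more elementary and gives strong convergence of the full sequence. However, it relies on $f\in\Cont^1$ with $f(0)=0$, and on the classical-solution theory for $A$ itself, to know in advance that $u(t)\in\dom(A)$ with $Au=f-u'$. The paper's route needs only $u_n\to u$ and the uniform bound, and it obtains $u(t)\in\dom(A)$ as part of the conclusion. It would therefore work for any right-hand side with $u_n\to u$, at the price of reflexivity and passing to a subsequence.
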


\begin{proof} 
Assertions \ref{item1:permanence MR} and \ref{item2:permanence MR} are well-known, see \cite[Proposition~17.2.32]{HvNVW3} and \cite[Theorem~17.2.19]{HvNVW3}. It remains to show that either of \ref{item3:permanence MR} and \ref{item4:permanence MR} implies \ref{item2:permanence MR}.

`\ref{item3:permanence MR} $\Longrightarrow$ \ref{item2:permanence MR}': 
    Let $f \in \Cont^1_c(0,\infty) \otimes (\L^1 \cap \L^\infty)$ and recall Zygmund's identity~\cite[Lemma 1]{MS11}: There is a constant $\gamma_p>0$ such that
    \begin{align*}
        \int_0^\infty \|g(t)\|_p^p \dd t = \gamma_p \int_0^{2 \pi}  \int_0^\infty \|\Re(\e^{\ii s}g(t))\|_p^p \dd t \dd s, \qquad g \in \L^p((0,\infty);\L^p).
    \end{align*}
    Writing $u_f$ for the mild solution of \eqref{eq:ACP} with right-hand side $f$, we have $\Re u_f = u_{\Re f}$ and $\Re (Au_f) = A u_{\Re(f)}$ since the semigroup is real. Hence, if we have \eqref{eq:MR} under the additional assumption that $f$ is real-valued, then
    \begin{align*}
        \int_0^\infty \|Au_f(t)\|_p^p \dd t 
        &= \gamma_p \int_0^{2 \pi} \int_0^\infty  \|Au_{\Re(\e^{\ii s} f)}(t)\|_p^p \dd t \dd s \\
        &\leq \gamma_p C^p \int_0^{2 \pi} \int_0^\infty \|\Re(\e^{\ii s} f(t))\|_p^p \dd t \dd s
        = C^p \int_0^\infty \|f(t)\|_p^p \dd t.
    \end{align*}
  
`\ref{item4:permanence MR} $\Longrightarrow$ \ref{item2:permanence MR}': 
    Again, let $f  \in \Cont^1_c(0,\infty) \otimes (\L^1 \cap \L^\infty)$. We fix $p \in (1,\infty)$, an arbitrary but finite $T>0$ and we write 
    \begin{align*}
        u(t) = \int_0^t T_{t-s}f(s) \dd s, \quad u_n(t) \coloneqq \int_0^t T_{t-s}^{(n)}f(s) \dd s
    \end{align*}
    for the mild solutions to the respective Cauchy problems. We pass to the limit in $n$ as follows.

    According to the Hille--Yosida theorem we have $\smash{T_t^{(n)}}v \to T_tv$ in $\Cont([0,T];\L^p)$ for every $v \in \L^p$.
    The special form of $f$ lets us conclude $u_n \to u$ in $\Cont([0,T];\L^p)$. Due to \ref{item4:permanence MR} and reflexivity we find a subsequence, also indexed by $n$ for simplicity, and a function $g \in \L^p((0,T); \L^p)$ such that $A_n u_n \to g$ weakly in $\L^p((0,T); \L^p)$. 
    
    In order to identify $g$ and $Au$, we work in $\L^p((0,T); \L^p)$. We set $w_n \coloneqq n(n+A)^{-1} u_n$ and note that $(w_n)$ is bounded due to the Hille--Yosida theorem and since the convergent sequence $(u_n)$ is bounded. Writing
    \begin{align*}
        w_n = u_n - \frac{1}{n} A w_n, \quad A w_n = A_n u _n - \frac{1}{n} u_n,
    \end{align*}
    we see that $w_n \to u$ strongly and $Aw_n \to g$ weakly. Since $A$ is a closed operator in $\L^p$, the subspace
    \begin{align*}
        W \coloneqq \biggl\{ \begin{pmatrix} w \\ A w \end{pmatrix} : w \in \L^p((0,T); \dom(A)) \biggr\} \subseteq \L^p((0,T); \L^p) \times \L^p((0,T); \L^p)
    \end{align*}
    is closed, hence weakly closed. Consequently, we have $(u,g) \in W$, which means that $u(t) \in \dom(A)$ for almost every $t \in (0,T)$ and $Au=g$. Hence, $A_n u_n \to Au$ weakly and we conclude
    \begin{align*}
        \|Au\|_{\L^p((0,T); \L^p)} \leq \liminf_{n \to \infty} \|A_n u_n\|_{\L^p((0,T); \L^p)} \leq C  \|f\|_{\L^p((0,T); \L^p)} ,
    \end{align*}
    where the final step follows from the assumption \ref{item4:permanence MR}. Since $T>0$ was arbitrary, \ref{item2:permanence MR} follows.  
\end{proof}
\begin{lem}
\label{lem:MR embedding}
Let $-A$ generate a real, symmetric contraction semigroup on a $\sigma$-finite measure space $(\Omega, \mu)$. Then there is a generator $-B$ of a positive, symmetric contraction semigroup on another $\sigma$-finite measure space $(\Sigma, \lambda)$ such that $\MR_p(A) \leq \MR_p(B)$ for every $p \in (1,\infty)$.
\end{lem}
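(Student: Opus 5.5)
We take $(S_t)_{t\geq0}$, $(\Sigma,\lambda)$ and the isometry $j$ from Lemma~\ref{lem:smg embedding}, and let $-B$ be the generator of $(S_t)_{t\geq 0}$. Then $-B$ generates a positive, symmetric contraction semigroup, and it remains to prove $\MR_p(A)\leq \MR_p(B)$ for each $p\in(1,\infty)$.

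By Lemma~\ref{lem:permanence MR}\ref{item2:permanence MR}, it suffices to verify \eqref{eq:MR} with constant $\MR_p(B)$ for $f\in \Cont^1_c(0,\infty)\otimes(\L^1\cap\L^\infty)$. For such $f$, each $f(s)$ lies in $\L^2\cap\L^p$, so the intertwining relation \eqref{eq:j intertwines} applies pointwise in $s$. Since $j$ is a bounded linear map $\L^p(\Omega,\mu)\to\L^p(\Sigma,\lambda)$, it commutes with Bochner integrals. Setting $u$ as the mild solution for $A$ with data $f$ and $\tilde u$ as the mild solution for $B$ with data $jf \coloneqq j\circ f$, we obtain
\begin{align*}
    j(u(t)) = \int_0^t j\bigl(T_{t-s}f(s)\bigr)\dd s = \int_0^t S_{t-s}\, j(f(s))\dd s = \tilde u(t).
\end{align*}

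Next we transfer the generators. Differentiating \eqref{eq:j intertwines} at $t=0$ shows that $j$ maps $\dom(A)\cap\L^2$, namely the domain of the $\L^2$-generator, into $\dom(B)$ with $jAv=Bjv$. The main technical point is to do this in the $\L^p$-realization. Since $j$ is an $\L^p$-isometry and the semigroups are consistent, \eqref{eq:j intertwines} extends by density to all $v\in\L^p$. For $v\in\dom(A)$ in $\L^p$, the difference quotients $t^{-1}(S_t jv - jv)=j\bigl(t^{-1}(T_tv-v)\bigr)$ then converge in $\L^p(\Sigma,\lambda)$ to $-jAv$. Hence $jv\in\dom(B)$ and $Bjv=jAv$.

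By Lemma~\ref{lem:permanence MR}, applied to $A$ or by the classical theory for smooth data, $u(t)\in\dom(A)$ for almost every $t$. Therefore $B\tilde u(t)=jAu(t)$, and because $j$ is an isometry,
\begin{align*}
\|Au\|_{\L^p((0,\infty);\L^p(\Omega))}=\|B\tilde u\|_{\L^p((0,\infty);\L^p(\Sigma))}\leq \MR_p(B)\,\|jf\|=\MR_p(B)\,\|f\|.
\end{align*}

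We should not need the fact that $u(t)\in\dom(A)$ beforehand. If $\MR_p(B)=\infty$ there is nothing to prove. Otherwise, $f\in\Cont^1_c$ gives $u\in \Cont^1$ with values in $\dom(A)$ directly, by the standard regularity of mild solutions for $\Cont^1$ data. This yields $\MR_p(A)\leq\MR_p(B)$.
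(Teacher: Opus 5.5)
Your proof is correct and follows the paper's route. You embed via the semigroup $(S_t)_{t\geq0}$ from the preceding lemma, reduce to $f\in\Cont^1_c(0,\infty)\otimes(\L^1\cap\L^\infty)$, note that $j(u)$ is the mild solution for $B$ with data $j(f)$, transfer the generator by differentiating the intertwining relation, and conclude from the isometry property of $j$.

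The only difference is the direction of the domain transfer. The paper gets $j(u(s))\in\dom(B)$ from maximal regularity of $B$ and pulls this back to $u(s)\in\dom(A)$, using that $j$ is isometric so the difference quotients converge in $\L^p(\Omega,\mu)$. You instead get $u(t)\in\dom(A)$ from classical regularity of mild solutions with $\Cont^1$ data and push it forward, which is equally valid. Your alternative appeal to the permanence lemma for that step would not by itself give $u(t)\in\dom(A)$, but it is not needed.
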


\begin{proof}
    We write $(T_t)_{t \geq 0}$ for the semigroup generated by $-A$, let $(S_t)_{t \geq 0}$ be as in Lemma~\ref{lem:smg embedding} and call $-B$ its generator. Let $f \in \Cont_c^1(0,\infty) \otimes (\L^1(\Omega,\mu) \cap \L^\infty(\Omega,\mu))$ and let $u$ be the corresponding mild solution to the abstract Cauchy problem \eqref{eq:ACP} for $A$. We assume that $\MR_p(B) \leq C$ and need to show \eqref{eq:MR}, see Lemma~\ref{lem:permanence MR}.
    
    By \eqref{eq:j intertwines}, we see that $j(u)$ is the mild solution to the abstract Cauchy problem for $B$ with right-hand side $j(f) \in \Cont_c^1(0,\infty) \otimes (\L^1(\Sigma,\lambda) \cap \L^\infty(\Sigma,\lambda))$. Maximal regularity yields $j(u(s)) \in \dom(B)$ for almost every $s >0$ along with the estimate
    \begin{align}
    \label{eq1:MR embedding}
    \|Bj(u)\|_{\L^p((0,\infty);\L^p(\Sigma,\lambda))}  \leq C \|j(f)\|_{\L^p((0,\infty);\L^p(\Sigma,\lambda))}.
    \end{align} 
    Differentiating \eqref{eq:j intertwines} at $t=0$, it follows that $u(s) \in \dom(A)$ with $j(Au(s)) = B j(u(s))$ for the same $s>0$. Since $j$ is an isometry between the respective $\L^p$-spaces, \eqref{eq1:MR embedding} implies \eqref{eq:MR}.
\end{proof}
\section{The qualitative assumptions}
\label{sec:qual-ass}

Summarizing the lemmas from the previous section and since we have already treated the case $p = 2$ in Section~\ref{sec:L2}, it suffices to prove \eqref{eq:MR} for $p>2$ and real-valued $f \in \Cont_c^1(0,\infty) \otimes (\L^1 \cap \L^\infty)$ under the following additional qualitative assumptions:
\begin{enumerate}[(i)]
    \item $A$ is bounded on $\L^r$ for every $r \in [1,\infty]$ ,
    \item there is $c>0$ such that $\|T_t\|_{r \to r} \leq \e^{-ct}$ for every $t \geq 0$ and every $r \in [1,\infty]$.
\end{enumerate} 
For Theorem~\ref{thm:symm}, Lemma~\ref{lem:MR embedding} also allows us to strengthen the assumption of realness to positivity without loss of generality. In this scenario, the mild solution $u: [0,\infty) \to \L^p$ is continuously differentiable, real-valued and solves \eqref{eq:ACP} in the strong sense. Moreover, we have
\begin{align*}
    \|A u(t)\|_p \leq \|A\|_{p \to p} \int_0^\infty \one_{[0,\infty)}(t-s) \e^{-c (t-s)} \|f(s)\|_p \dd s, \quad t \geq 0.
\end{align*}
Thus, Young's convolution inequality yields $Au \in \L^p((0,\infty);\L^p)$ qualitatively.

\section{Reduction to a nonlinear form bound}
\label{sec:reduction to H}

We now carry out the first step of the strategy outlined in Section~\ref{sec:proof strategies}. We maintain the qualitative assumptions from Section~\ref{sec:qual-ass}. In particular, $p > 2 $ is fixed and the following two quantities are well-defined.
\begin{defn}
\label{defn: V and H} 
For $v,h \in \L^1 \cap \L^\infty$ define
\begin{align*}
    V(v) \coloneqq \int_0^\infty \| A T_t v\|_p^p \dd t 
    \quad \text{and} \quad H(v,h) \coloneqq \int_0^\infty \langle \phi(T_t v), A T_t h \rangle \dd t.
\end{align*}
\end{defn}
\begin{lem}
\label{lem:DV}
Let $v,h \in \L^1 \cap \L^\infty$. Then $V$ is (real) Fr\'echet differentiable at $v$ and we have
\begin{enumerate}[(a)]
    \item\label{item:DVvh} $D V(v) h = p \Re H(Av, h)$,
    \item \label{item:DVvAv} $DV(v) Av = \|Av\|_p^p$.
\end{enumerate}
\end{lem}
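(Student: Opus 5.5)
We will differentiate $V$ under the time integral by means of Lemma~\ref{lem:Nemytskii}, working under the qualitative assumptions of Section~\ref{sec:qual-ass}. Take $\psi(z) = |z|^p$ on $\C \cong \R^2$, so that $\nabla\psi(z)\cdot h = p\Re(\phi(z) h)$. Since $p>2$, Lemma~\ref{lem:Nemytskii} with $\alpha = r = p$ shows that $w \mapsto \|w\|_p^p$ is continuously Fr\'echet differentiable on $\L^p$, with derivative $h \mapsto p \Re\langle \phi(w), h\rangle$.

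For fixed $t$, the map $v \mapsto A T_t v$ is bounded and linear on $\L^p$. By the chain rule, the integrand $F_t(v) \coloneqq \|AT_tv\|_p^p$ therefore has derivative
\begin{align*}
DF_t(v)h = p\Re\langle \phi(AT_tv), AT_th\rangle = p \Re \langle \phi(T_t Av), AT_t h\rangle,
\end{align*}
where we used that $A$ commutes with $T_t$ because $A$ is bounded.

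To move the derivative through $\int_0^\infty$, we use the mean value theorem and the exponential decay. For $\|h\|_p \leq 1$ the remainder satisfies
\begin{align*}
|F_t(v+h)-F_t(v)-DF_t(v)h| \leq \sup_{\tau\in[0,1]} \|DF_t(v+\tau h)-DF_t(v)\|\,\|h\|_p.
\end{align*}
Since $\|AT_t\|_{p\to p}\leq \|A\| e^{-ct}$, the difference of derivatives is bounded by $C(\|v\|_p+1)^{p-1}e^{-cpt}$, which is integrable in $t$. It also tends to $0$ as $h\to0$, uniformly in $t$ on compacts, by the continuity of $w\mapsto\phi(w)$ from $\L^p$ to $\L^{q}$, which is again part of Lemma~\ref{lem:Nemytskii}. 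A dominated convergence argument then gives that the integrated remainder is $o(\|h\|_p)$. It is natural to differentiate in $\L^p$; since $\L^1\cap\L^\infty \subseteq \L^p$, this covers all $v$ in the statement. Integrating the formula for $DF_t(v)h$ yields \ref{item:DVvh}, namely $DV(v)h = p\Re H(Av,h)$.

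For \ref{item:DVvAv}, insert $h = Av$ into \ref{item:DVvh}:
\begin{align*}
DV(v)Av = p\int_0^\infty \Re\langle \phi(AT_tv), A T_t Av\rangle \dd t.
\end{align*}
Since $\frac{\d}{\d t} AT_tv = -AT_tAv$, the integrand equals $-\frac{\d}{\d t}\frac1p\|AT_tv\|_p^p$ times $p$, by the chain rule applied above, now along the $\Cont^1$ curve $t\mapsto AT_tv$. So, up to sign, the integral is $\|AT_0v\|_p^p - \lim_{t\to\infty}\|AT_tv\|_p^p$. The limit vanishes by exponential decay, and the result is $\|Av\|_p^p$.

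The sign needs care. Differentiating $V$ in the direction $Av$ means evaluating $\frac{\d}{\d\eps}V(v+\eps Av)$, whereas $V(T_sv)$ changes in the direction $-Av$. Consistently, $DV(v)(-Av) = \frac{\d}{\d s}V(T_sv)|_{s=0} = -\|Av\|_p^p$, because $V(T_sv)=\int_s^\infty\|AT_tv\|_p^p\dd t$. This confirms \ref{item:DVvAv} with a positive sign.

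The main obstacle is the justification of differentiating under the integral; the rest is routine bookkeeping, made easy by boundedness of $A$ and exponential stability.
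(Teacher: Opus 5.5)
Your proof is correct, but for (a) it takes a different route from the paper. You differentiate the integrand $\|AT_tv\|_p^p$ for each fixed $t$ and then justify passing the derivative through $\int_0^\infty$ by a mean-value and dominated-convergence argument based on $\|AT_t\|_{p\to p}\le\|A\|\e^{-ct}$.

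The paper avoids this interchange altogether. It writes $V$ as the composition of the bounded linear orbit map $\L^p(\Omega)\to\L^p((0,\infty)\times\Omega)$, $v\mapsto(AT_tv)_{t>0}$, with the $p$-th power of the norm on the product space. Then Lemma~\ref{lem:Nemytskii}, applied on the measure space $(0,\infty)\times\Omega$, together with the chain rule gives (even continuous) Fr\'echet differentiability in one line. The exponential decay is still used there, but only to make the orbit map bounded. Your version makes the limit interchange explicit; the paper's is shorter because time is treated as just another variable of the underlying measure space.

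For (b), your main argument is a valid alternative to the paper's: you plug $h=Av$ into (a) and recognize the integrand as $-\frac{\d}{\d t}\|AT_tv\|_p^p$. This computation already yields the correct sign, since $\int_0^\infty -\frac{\d}{\d t}\|AT_tv\|_p^p\dd t=\|Av\|_p^p$, so the hedge ``up to sign'' and the subsequent sign discussion are unnecessary. Your closing consistency check via $V(T_sv)=\int_s^\infty\|AT_tv\|_p^p\dd t$ is exactly the paper's proof of (b). That argument needs only differentiability of $V$ and the chain rule along $s\mapsto T_sv$, not the explicit formula from (a).
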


\begin{proof}
The functional $V$ is the composition of the bounded linear orbit map
    \begin{align*}
        \L^p(\Omega) \to \L^p((0,\infty) \times \Omega), \quad v \mapsto (AT_tv)_{t > 0}
    \end{align*}
and the $p$-th power of the norm on $\L^p((0,\infty) \times \Omega)$, which is (real) Fr\'echet differentiable by Lemma~\ref{lem:Nemytskii}. The chain rule yields that $V$ is (real) Fr\'echet differentiable with derivative
    \begin{align*}
        D V(v) h = p \int_0^\infty \Re \langle \phi(A T_t v), A T_t h \rangle \dd t = p \Re H(Av,h).
    \end{align*}
This proves \ref{item:DVvh}. Next, we note that for $\tau \geq 0$ the semigroup law yields
    \begin{align*}
        V(T_\tau v) = \int_\tau^\infty \|A T_t v\|_p^p \dd t.
    \end{align*} 
Differentiating either side of the expression with respect to $\tau$, we find
    \begin{align*}
      D V(T_\tau v)(-A T_\tau v) =  \frac{\d}{\d \tau} V(T_\tau v) =  - \|A T_\tau v\|_p^p
    \end{align*}
and evaluating at $\tau = 0$, we obtain \ref{item:DVvAv}.
\end{proof}
We can now reduce the problem of bounding $\MR_p(A)$ to finding upper bounds for the nonlinear form $H$.
\begin{prop}
\label{prop:MR}
Let $\kappa \geq 0$. Then $\MR_p(A) \leq p \kappa$ provided that for all real-valued $v,h \in \L^1 \cap \L^\infty$ with $\|v\|_p = \|h\|_p = 1$ we have
    \begin{align}
        \label{ineq:Hkappa}
        \Re H(v,h) \leq \kappa.
    \end{align}
\end{prop}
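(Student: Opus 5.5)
Under the qualitative assumptions of Section~\ref{sec:qual-ass}, fix a real-valued $f \in \Cont_c^1(0,\infty) \otimes (\L^1 \cap \L^\infty)$ and let $u$ be the strong solution of \eqref{eq:ACP}. The plan is to study the Lyapunov-type function $t \mapsto V(u(t))$ and to integrate its derivative in time. Since $A$ is bounded and the semigroup decays exponentially, $u(t)$ stays in $\L^1 \cap \L^\infty$, $u$ is $\Cont^1$ with values in $\L^p$, and $V$ is Fréchet differentiable by Lemma~\ref{lem:DV}. The chain rule and $u' = f - Au$ give
\begin{align*}
    \frac{\d}{\d t} V(u(t)) = DV(u(t))\bigl(f(t) - Au(t)\bigr) = p \Re H\bigl(Au(t), f(t)\bigr) - \|Au(t)\|_p^p,
\end{align*}
where the first term comes from Lemma~\ref{lem:DV}\ref{item:DVvh} and the second from Lemma~\ref{lem:DV}\ref{item:DVvAv}.

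Next I use the homogeneity of $H$. Since $\phi$ is $(p-1)$-homogeneous and $H$ is linear in its second argument, scaling gives $H(v,h) = \|v\|_p^{p-1}\|h\|_p \, H(v/\|v\|_p, h/\|h\|_p)$ for nonzero real-valued $v,h$. Applying \eqref{ineq:Hkappa} with $v = Au(t)$ (real-valued, since the semigroup is real and $f$ is real) and $h = f(t)$ then yields
\begin{align*}
    \frac{\d}{\d t} V(u(t)) + \|Au(t)\|_p^p \leq p\kappa \|Au(t)\|_p^{p-1}\|f(t)\|_p.
\end{align*}
The degenerate cases $Au(t)=0$ or $f(t)=0$ are trivial.

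Now integrate over $(0,T)$. We have $V(u(0)) = V(0) = 0$ and $V \geq 0$, so the $V$-term can be dropped, and Hölder's inequality in time gives
\begin{align*}
    \int_0^T \|Au\|_p^p \dd t \leq p\kappa \Bigl(\int_0^T \|Au\|_p^p \dd t\Bigr)^{(p-1)/p} \Bigl(\int_0^T \|f\|_p^p \dd t\Bigr)^{1/p}.
\end{align*}
The left-hand side is finite qualitatively, since $Au \in \L^p((0,\infty);\L^p)$ by Section~\ref{sec:qual-ass}. We may therefore divide, which gives $\|Au\|_{\L^p(0,T;\L^p)} \leq p\kappa \|f\|_{\L^p(0,T;\L^p)}$. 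Letting $T \to \infty$ and invoking Lemmas~\ref{lem:permanence MR} and \ref{lem:MR embedding} removes the qualitative assumptions.

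The main obstacle I expect is the rigorous chain rule for $t \mapsto V(u(t))$. This requires $u \in \Cont^1([0,\infty);\L^p)$ together with continuity of $DV$ on $\L^p$, not only on $\L^1 \cap \L^\infty$. I would get this from Lemma~\ref{lem:Nemytskii}, since the orbit map is bounded on $\L^p$ when $A$ is bounded and $V$ extends to all of $\L^p$. I would also check that $H(Au,f)$ is well defined for $u(t) \in \L^1 \cap \L^\infty$; this holds because $A$ and $T_t$ act boundedly on every $\L^r$.
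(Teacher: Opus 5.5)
Your proof is correct and follows essentially the same route as the paper. You differentiate $V(u(t))$ using Lemma~\ref{lem:DV}, drop $V(u(T))\geq 0$ after integrating from $V(0)=0$, use the homogeneity of $H$ to turn \eqref{ineq:Hkappa} into $\Re H(v,h)\leq\kappa\|v\|_p^{p-1}\|h\|_p$, and then apply H\"older in time, dividing because $\int\|Au\|_p^p$ is finite by the qualitative assumptions. The only differences are cosmetic: you apply the pointwise bound before integrating rather than after. Your closing appeal to Lemma~\ref{lem:MR embedding} is also unnecessary here. The proposition is formulated under the qualitative assumptions, so only Lemma~\ref{lem:permanence MR}\ref{item3:permanence MR} is needed to pass from real tensor-class $f$ to $\MR_p(A)\leq p\kappa$.
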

\begin{proof}
Let $u$ be the mild solution to the abstract Cauchy problem \eqref{eq:ACP} with right-hand side $f$ as specified in the qualitative assumptions. From Section~\ref{sec:qual-ass} we know that $u: [0,\infty) \to \L^p$ is continuously differentiable. Consequently, we can compute the dissipation rate of $V$ along $u$ by the chain rule, taking into account Lemma~\ref{lem:DV}:
    \begin{align*}
        \frac{\d}{\d t} V(u(t)) 
        = D V(u(t))u^\prime(t) 
        = p \Re H(A u(t), f(t)) - \|A u(t)\|_p^p.
    \end{align*}
Integrating both sides with respect to $t \in [0,T]$ for some finite $T>0$ and using $V(0) = 0$ yields the energy estimate
    \begin{align*}
        V(u(T)) = \int_0^T p \Re H(A u(t),f(t)) - \|A u(t)\|_p^p \dd t.
    \end{align*}
Since $V(u(T)) \geq 0$, we can re-arrange terms and take the supremum in $T$ to obtain
    \begin{align*}
        \int_0^\infty \|A u(t) \|_p^p \dd t \leq p \int_0^\infty H(A u(t),f(t)) \dd t.
    \end{align*}
By homogeneity and since $H$ vanishes if either of its variables is zero, \eqref{ineq:Hkappa} implies that for all real $v,h \in \L^1 \cap \L^\infty$ we have $\Re H(v,h) \leq \kappa \|v\|_p^{p-1} \|h\|_p$. The qualitative assumption lets us use this bound with $v = Au(t)$, $h = f(t)$ for every $t>0$. Thus,
\begin{align*}
     \int_0^\infty \|A u(t) \|_p^p \dd t 
     &\leq p \kappa  \int_0^\infty \|A u(t)\|_p^{p-1} \|f(t)\|_p \dd t \\
     &\leq p \kappa  \biggl( \int_0^\infty \|A u(t)\|_p^p \dd t\biggr)^{1-1/p}  \biggl( \int_0^\infty  \|f(t)\|_p^p \dd t \biggr)^{1/p},
\end{align*}
where we used Hölder's inequality in the second step. Since the integral on the left is finite by the qualitative assumptions, we obtain \eqref{eq:MR} with $C = p \kappa$, as claimed.
\end{proof}
\section{Proof of Theorem~\ref{thm:non-symm}}
\label{sec:Bernstein}
In this section we prove Theorem~\ref{thm:non-symm} under the qualitative assumptions. First, we illustrate how contractivity of the semigroup yields bounds for $H$ from Definition~\ref{defn: V and H} on the diagonal
\begin{align*}
    H(v) \coloneqq H(v,v)
\end{align*}
through the Lumer--Phillips theorem.
\begin{lem}
\label{lem:ReImH}
If $v \in \L^1 \cap \L^\infty$, then
    \begin{enumerate}[(a)]
        \item\label{item:ReH} $\Re H(v) = \frac{1}{p} \|v\|_p^p$,
        \item\label{item:ImH} $|\Im H(v)| \leq \frac{\cot(\theta_p)}{p} \|v\|_p^p$.
    \end{enumerate}
\end{lem}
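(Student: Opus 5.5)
The plan is to express the integrand of $H(v)$ as a derivative in $t$ of $\|T_tv\|_p^p$, and to use the Lumer--Phillips theorem on the sector for the imaginary part. We work under the qualitative assumptions of Section~\ref{sec:qual-ass}: $A$ is bounded and $\|T_t\|_{p\to p}\le \e^{-ct}$. With $v\in\L^1\cap\L^\infty$, the orbit $t\mapsto T_tv$ is then $\Cont^1$ into $\L^p$ with derivative $-AT_tv$.

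\textbf{Part (a).} By Lemma~\ref{lem:Nemytskii} and the remark following it, applied with $\psi(z)=|z|^p$ so that $\nabla\psi(z)\cdot h = p\Re(\phi(z)h)$, the functional $w\mapsto\|w\|_p^p$ is Fréchet differentiable on $\L^p$. Its derivative is $h\mapsto p\Re\langle\phi(w),h\rangle$. The chain rule then gives
\begin{align*}
\frac{\d}{\d t}\|T_tv\|_p^p = -p\Re\langle \phi(T_tv), AT_tv\rangle .
\end{align*}
We integrate over $(0,\infty)$. The boundary term at infinity vanishes because $\|T_tv\|_p\le \e^{-ct}\|v\|_p$. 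The integrand is integrable since $|\langle\phi(T_tv),AT_tv\rangle|\le\|A\|\,\|T_tv\|_p^p$. This yields $p\Re H(v)=\|v\|_p^p$.

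\textbf{Part (b).} Sectorial contractivity means the semigroup extends to a contraction semigroup on $\L^p$ over $\S_{\theta_p}$. Equivalently, $\e^{\pm\ii\theta}A$ is accretive on $\L^p$ for $|\theta|<\theta_p$, and by continuity also for $\theta=\theta_p$. By Lumer--Phillips this gives, for every $w\in\L^p$,
\begin{align*}
|\arg\langle Aw,\phi(w)\rangle|\le \tfrac\pi2-\theta_p .
\end{align*}
This is the same fact already used in the proofs of Proposition~\ref{prop:L2} and Lemma~\ref{lem:yosida}. It yields
\begin{align*}
|\Im\langle\phi(w),Aw\rangle|\le\cot(\theta_p)\,\Re\langle\phi(w),Aw\rangle .
\end{align*}
Apply this pointwise in $t$ with $w=T_tv$ and integrate. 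Using part (a), we obtain
\begin{align*}
|\Im H(v)|\le\int_0^\infty|\Im\langle\phi(T_tv),AT_tv\rangle|\dd t\le\cot(\theta_p)\int_0^\infty\Re\langle\phi(T_tv),AT_tv\rangle\dd t=\tfrac{\cot(\theta_p)}{p}\|v\|_p^p .
\end{align*}

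\textbf{Main obstacle.} The only delicate point is justifying the sector condition on $\langle Aw,\phi(w)\rangle$ in the nonsymmetric $\L^p$ setting. Concretely, one must check that contractivity of $\e^{-z A}$ for $z\in\S_{\theta_p}$ gives dissipativity of $-\e^{\pm\ii\theta}A$ with respect to the duality map $\phi(w)/\|w\|_p^{p-2}$. The plan is to differentiate $\|\e^{-s\e^{\pm\ii\theta}A}w\|_p^p\le\|w\|_p^p$ at $s=0^+$ via Lemma~\ref{lem:Nemytskii}, and then let $\theta\uparrow\theta_p$. The bilinear pairing convention makes the pairing $\langle\phi(w),Aw\rangle$ the correct object here, since $\phi(w)=|w|^{p-2}\bar w$.
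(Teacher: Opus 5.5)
Your proposal is correct and matches the paper's proof: part (a) differentiates $\|T_tv\|_p^p$ via the Nemytskii lemma and integrates using the decay of $\|T_tv\|_p$, and part (b) integrates the Lumer--Phillips sector bound $|\Im z(t)|\le\cot(\theta_p)\Re z(t)$ for $z(t)=\langle\phi(T_tv),AT_tv\rangle$ and then applies (a). Your extra justification of the sector condition (differentiating $\|\e^{-s\e^{\pm\ii\theta}A}w\|_p^p\le\|w\|_p^p$ at $s=0^+$ and letting $\theta\uparrow\theta_p$) correctly unpacks what the paper simply cites as the Lumer--Phillips theorem.
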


\begin{proof}
    As in the proof of Lemma~\ref{lem:DV}, (real) Fr\'echet differentiability of the $p$-th power of the norm on $\L^p$ yields
    \begin{align*}
        - \frac{\d}{\d t} \|T_t v\|_p^p = p \Re \langle \phi(T_t v) , A T_t v \rangle .
    \end{align*}
    Integrating both sides in $t\geq 0$ and using $\|T_t v\|_p \to 0$ as $t \to \infty$ by the qualitative assumption gives \ref{item:ReH}. For \ref{item:ImH} we set $z(t) \coloneqq \langle \phi(T_t v),A T_t v \rangle$. By sectorial contractivity and the Lumer--Phillips theorem we get $|\arg z(t)| \leq \nicefrac{\pi}{2} - \theta_p$ and we conclude 
    \begin{align*}
        |\Im z(t)| \leq \cot(\theta_p) \Re z(t).
    \end{align*}
    Now, \ref{item:ImH} follows upon integrating both sides in $t\geq 0$ and using \ref{item:ReH}.
\end{proof}
For the rest of the section, we retain the additional assumption of Theorem~\ref{thm:non-symm} that $p =2n$ is an even integer. According to Proposition~\ref{prop:MR} we need to show
\begin{align}
\label{eq:goal section 4}
\Re H(v,h) \leq \frac{1}{p} + \frac{p-1}{p} \cot(\theta_p)
\end{align}
for all real-valued $v,h \in \L^1 \cap \L^\infty$ that are normalized to $\|v\|_p = \|h\|_p = 1$. Henceforth, such $v,h$ are fixed. 
We will bound $H(v,h)$ by studying the variation of $H(v)$ in directions $h$ and $\ii h$. 

Since $p$ is an even integer, $\phi(z) = |z|^{p-2}\bar{z} = (z \bar{z})^{n-1} \bar{z}$ is a polynomial in $z$ and $\bar{z}$ and for real numbers $\alpha, \beta$ we find
\begin{align*}
    H(\alpha v + \beta h) 
    &= \int_0^\infty \Big \langle \bigl(\alpha T_t v + \beta T_t h\bigr)^{p-1}, \alpha A T_t v + \beta A T_t h\Big \rangle \dd t, \\
    H(\alpha v + \ii \beta h) &= \int_0^\infty \Big \langle \bigl(\alpha^2 (T_t v)^2 + \beta^2 (T_t h)^2\bigr)^{n-1}\bigl(\alpha T_t v - \ii \beta T_t h\bigr), \alpha A T_t v + \ii \beta A T_t h\Big \rangle \dd t.
\end{align*}
Expansion shows that both are polynomials in $\alpha$ and $\beta$.
Differentiation at $\beta = 0$ amounts to identifying the linear terms in $\beta$.
For $\alpha = 1$ and $\beta = \eps$ we get
\begin{align}
    \frac{\d}{\d \eps} \Re H(v+\eps h)\Big|_{\eps = 0} 
    & = H(v,h) + (p-1) \int_0^\infty \big \langle (T_t v)^{p-2} T_t h, A T_t v  \big \rangle \dd t, \label{eq:ddepsRe} \\
    \frac{\d}{\d \eps} \Im H(v+\ii \eps h)\Big|_{\eps = 0} 
    &= H(v,h) - \int_0^\infty \big \langle (T_t v)^{p-2} T_t h, A T_t v  \big \rangle \dd t   \label{eq:ddepsIm}.
\end{align}
The derivative in \eqref{eq:ddepsRe} can also be computed using Lemma~\ref{lem:ReImH}~\ref{item:ReH} and differentiating the $p$-th power of the norm on $\L^p$:
\begin{align*}
    \frac{\d}{\d \eps} \Re H(v+\eps h)\Big|_{\eps = 0}  = \langle\phi(v),h\rangle.
\end{align*}
Adding $(p-1)$ times \eqref{eq:ddepsIm} to \eqref{eq:ddepsRe} eliminates the integral term and leaves us with
\begin{align}
\label{eq:pH}
    p H(v,h) = \langle \phi(v),h\rangle + (p-1) \frac{\d}{\d \eps}\Im H(v+\ii \eps h)\Big|_{\eps = 0}.
\end{align}

The obstacle is now clear: Lemma~\ref{lem:ReImH} shows that contractivity of the semigroup bounds the magnitude of $H$, but does not control its directional derivative in the imaginary direction. Bernstein's inequality~\cite{Riesz1914} for trigonometric polynomials provides a very convenient way out. Indeed,  
\begin{align*}
    P(\theta) \coloneqq \Im H(\cos(\theta)v + \ii \sin(\theta) h)
\end{align*}
is a trigonometric polynomial of degree at most $2n = p$ and expanding $P(\theta)$ and using $\cos^\prime(0) = 0$ and $\sin^\prime(0) = 1$, we find that the derivative in \eqref{eq:pH} can be re-interpreted as
\begin{align*}
    P^\prime(0) = \frac{\d}{\d \eps} \Im H(v+\ii \eps h)\Big|_{\eps = 0}.
\end{align*} 
Lemma~\ref{lem:ReImH} controls the magnitude of $P$ and we find for every $\theta \in \R$ that
\begin{align*}
    |P(\theta)| 
    &\leq \frac{\cot(\theta_p)}{p} \| \cos(\theta)v + \ii \sin(\theta)h\|_p^p \\
    &= \frac{\cot(\theta_p)}{p} \| \cos^2(\theta)v^2 + \sin^2(\theta)h^2\|_n^n \\
    &\leq \frac{\cot(\theta_p)}{p} \bigl( \cos^2(\theta) \|v^2\|_n + \sin^2(\theta) \|h^2\|_n\bigr)^n \\
    &= \frac{\cot(\theta_p)}{p}.
\end{align*}
The upshot is that Bernstein's inequality yields
\begin{align*}
    |P'(0)| \leq p \sup_{\theta \in \R} |P(\theta)| \leq \cot(\theta_p).
\end{align*}

Going back to \eqref{eq:pH}, we conclude
\begin{align*}
    p|H(v,h)| 
    \leq |\langle \phi(v),h\rangle| + (p-1) \cot(\theta_p)
    \leq 1 + (p-1) \cot(\theta_p),
\end{align*}
where the second step follows from Hölder's inequality and the normalization of $v$ and $h$. Dividing by $p$ gives \eqref{eq:goal section 4} and the proof is complete.\hfill\qed{\parfillskip0pt\par}
\section{Proof of Theorem~\ref{thm:symm}}
\label{sec:subsolutions}

In this section we prove Theorem~\ref{thm:symm} under the qualitative assumptions. In particular, we also assume that $(T_t)_{t \geq 0}$ is positive, which, in turn, will allow us to estimate nonlinear terms through the following inequalities for submarkovian semigroups.
\begin{prop}[{\cite[Proposition~3.4, Theorem~3.5]{Ouhabaz2005}}]
    \label{prop:Ouhabaz}
     If $j \in \Cont^1(\R; \R)$ is convex and satisfies $j(0) = 0$, and $w\in \L^1 \cap \L^\infty$ is real-valued, then the following hold true.
     \begin{enumerate}[(a)]
         \item Jensen's inequality: $j(T_\eps w) \leq T_\eps j(w)$ for all $ \eps \geq 0$. \label{Jensen}
         \item Kato's inequality: $A j(w) \leq j'(w) Aw$, provided that $j'$ is bounded. \label{Kato}
    \end{enumerate}
\end{prop}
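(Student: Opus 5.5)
The plan is to prove \ref{Jensen} first, directly from positivity and the sub-Markov property of $T_\eps$, and then to derive \ref{Kato} from \ref{Jensen} by differentiating at $\eps=0$. This uses that $A$ is bounded under the qualitative assumptions of Section~\ref{sec:qual-ass}. Throughout, $(T_t)_{t\geq0}$ is positive and contractive on $\L^\infty$. Since $0\leq g\leq 1$ implies $0\leq T_\eps g\leq 1$, the operator $T_\eps$ is sub-Markovian. Note also that $j(w)\in\L^1\cap\L^\infty$: indeed $w$ is bounded, $j$ is Lipschitz on the range of $w$, and $j(0)=0$.

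\emph{Jensen.} Since $j\in\Cont^1$ is convex, it is the pointwise supremum of countably many affine minorants $\ell_k(x)=a_kx+b_k$, for example the tangents at rational points. Evaluating $\ell_k\le j$ at $0$ gives $b_k\le j(0)=0$. The constant $b_k$ is not integrable, so I would avoid applying $T_\eps$ to it. Let $F\coloneqq\{w\neq0\}$. The key pointwise inequality is
\begin{align*}
  a_k w + b_k\one_F \leq j(w) \qquad \text{on } \Omega.
\end{align*}
On $F$ this is $\ell_k(w)\le j(w)$. On $\Omega\setminus F$ both sides vanish, because $w=0$ and $j(0)=0$. Applying the positive operator $T_\eps$ gives $a_kT_\eps w+b_kT_\eps\one_F\le T_\eps j(w)$. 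Here $T_\eps$ acts on $\one_F\in\L^\infty$ via its $\L^\infty$-extension; if one prefers, $F$ can be exhausted by sets of finite measure, using $\sigma$-finiteness. Sub-Markovianity gives $0\le T_\eps\one_F\le1$, and together with $b_k\le0$ this yields $b_k\le b_kT_\eps\one_F$. Hence $\ell_k(T_\eps w)\le T_\eps j(w)$ almost everywhere for every $k$. Taking the supremum over the countably many $k$ gives $j(T_\eps w)\le T_\eps j(w)$.

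\emph{Kato.} Since $A$ is bounded on $\L^p$, we have $A g=\lim_{\eps\to0}\eps^{-1}(g-T_\eps g)$ in norm, and hence almost everywhere along a subsequence, for $g=w$ and $g=j(w)$. By \ref{Jensen},
\begin{align*}
  \frac{j(w)-T_\eps j(w)}{\eps}\leq\frac{j(w)-j(T_\eps w)}{\eps}\leq j'(w)\,\frac{w-T_\eps w}{\eps}.
\end{align*}
The second step is the convexity inequality $j(x)-j(y)\le j'(x)(x-y)$. Passing to the limit along a common subsequence, and using that $j'(w)$ is bounded, gives $Aj(w)\le j'(w)Aw$ almost everywhere. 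In the general unbounded setting, the hypothesis that $j'$ is bounded would also be needed to place $j(w)$ in the domain, but under our qualitative assumptions this issue does not arise.

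The main obstacle is the constant term in the affine minorants in \ref{Jensen}: $T_\eps$ does not preserve constants, and constants do not lie in $\L^p$. The device of replacing $b_k$ by $b_k\one_F$, combined with $b_k\le0$ (which is exactly where $j(0)=0$ enters) and $T_\eps\one_F\le1$, is what makes the argument work. Everything else is routine.
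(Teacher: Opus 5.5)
Your proof is correct. The paper gives no argument of its own here; it quotes the result from Ouhabaz's monograph, whose setting allows unbounded generators. Your route is the standard one:
\begin{itemize}
\item For Jensen's inequality you use countably many tangent minorants $a_kx+b_k$ with $b_k\le j(0)=0$, together with positivity and $0\le T_\eps\one\le 1$.
\item For Kato's inequality you take one-sided difference quotients at $\eps=0$ and combine them with the tangent inequality.
\end{itemize}
It becomes completely elementary, with a pointwise a.e.\ conclusion, because $A$ is bounded under the assumptions of Section~\ref{sec:qual-ass}. That is the only setting in which the paper uses the proposition, namely in Lemma~\ref{lem:subsol} and in the proof of Lemma~\ref{lem:backeq}. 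As you note, boundedness of $j'$ is then not even needed, since $w\in\L^\infty$.

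One small correction to your self-assessment. Restricting the intercept to $F=\{w\neq0\}$ is not what makes the argument work. The set $F$ may well have infinite measure (for instance if $w>0$ everywhere), so you need the $\L^\infty$-extension or an exhaustion anyway. With an exhaustion $\Omega_m\uparrow\Omega$ by sets of finite measure, you can proceed directly:
\begin{itemize}
\item apply $T_\eps$ to $(a_kw+b_k)\one_{\Omega_m}\le j(w)\one_{\Omega_m}$;
\item use $b_kT_\eps\one_{\Omega_m}\ge b_k$;
\item let $m\to\infty$ in $\L^1$.
\end{itemize}
The hypothesis $j(0)=0$ then enters only through $b_k\le0$ and through the integrability of $j(w)$.
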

As in the previous section, we invoke Proposition~\ref{prop:MR}. This time it reduces the claim to establishing the bound 
\begin{align}
\label{eq:goal section 5}
\Re H(v,h) \leq \frac{1+q}{2}
\end{align}
for all real-valued functions $v,h \in \L^1 \cap \L^\infty$ normalized to $\|v\|_p = \|h\|_p = 1$. Indeed,
\begin{align*}
    \frac{p(1+q)}{2} = \frac{p(2p-1)}{2(p-1)} = p + \frac{p}{2(p-1)}
\end{align*}
is the bound stated in Theorem~\ref{thm:symm}. For the rest of the section such $v,h$ are fixed. 

In this proof we use the parabolic operator $\frac{\d}{\d t} + A$ more explicitly. Concerning derivatives of the nonlinear terms we note that the functions $t\mapsto \phi(T_t v)$ and $t \mapsto |\phi(T_t v)|$ are of class $\Cont^1([0,\infty); \L^q)$. Indeed, since $p>2$, this follows directly from the qualitative assumptions, Lemma~\ref{lem:Nemytskii} and the chain rule.

We start the proof of \eqref{eq:goal section 5} by observing that
     \begin{align*}
         - \frac{\d}{\d t} \langle \phi(T_t v), T_t h \rangle = \langle \phi(T_t v), A T_t h \rangle - \Big \langle \frac{\d}{\d t} \phi(T_t v), T_t h \Big \rangle.
     \end{align*}
Since $\|T_th\|_p \to 0$ as $t \to \infty$ by the qualitative assumption, integration in $t\geq 0$ yields
     \begin{align*}
        \langle \phi(v),h \rangle 
        &= H(v,h) - \int_0^\infty \Big \langle \frac{\d}{\d t} \phi(T_t v), T_t h \Big \rangle \dd t \\
        &= H(v,h) - \int_0^\infty \Big \langle \Bigl(\frac{\d}{\d t} + A\Big) \phi(T_t v), T_t h\Big \rangle \dd t + \int_0^\infty \Big \langle A \phi(T_t v), T_t h \Big \rangle \dd t.
    \end{align*}
The upshot is that the final term is the same as $H(v,h)$ since $A$ is symmetric. Thus,
    \begin{align*}
        2 H(v,h) = \langle \phi(v),h \rangle + \int_0^\infty \Big \langle \Bigl(\frac{\d}{\d t} + A\Big) \phi(T_t v), T_t h \Big \rangle \dd t
    \end{align*}
and, using Hölder's inequality and the normalization of $v$ and $h$, this gives
    \begin{align}
    \label{eq:first intermediate result section 5}
        2 |H(v,h)| \leq 1 + \int_0^\infty \Big \langle \Big|\Bigl(\frac{\d}{\d t} + A\Big) \phi(T_t v)\Big|, |T_t h|\Big \rangle \dd t.
    \end{align}
Note that we have put absolute values on both functions in the bilinear pairing on the right. To continue, we use that $|\phi(T_t v)|$ is a subsolution to the parabolic equation. More precisely:
\begin{lem}
\label{lem:subsol}
We have
        \begin{align*}
            \Big|\Bigl(\frac{\d}{\d t} + A\Big) \phi(T_t v)\Big| \leq - \Bigl(\frac{\d}{\d t} + A \Big) | \phi(T_t v) | \quad \text{$\mu$-a.e.\ on $\Omega$.}
        \end{align*}
\end{lem}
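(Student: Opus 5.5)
The plan is to split $\phi = \phi_p$ into two convex pieces and apply Kato's inequality (Proposition~\ref{prop:Ouhabaz}\,\ref{Kato}) to each. Write $w_t \coloneqq T_t v$; this is real-valued because the semigroup is real. For real arguments we have $\phi(s) = |s|^{p-2}s$ and $|\phi(s)| = |s|^{p-1}$. Set
\begin{align*}
    j_\pm(s) \coloneqq |s|^{p-1} \pm |s|^{p-2}s, \qquad s \in \R,
\end{align*}
so that $j_+(s) = 2\max(s,0)^{p-1}$ and $j_-(s) = 2\max(-s,0)^{p-1}$. Since $p>2$, both functions are of class $\Cont^1(\R;\R)$, convex, and vanish at $0$. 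Moreover $|\phi(w_t)| \pm \phi(w_t) = j_\pm(w_t)$. By linearity of $\frac{\d}{\d t} + A$, the claim is equivalent to the pair of inequalities
\begin{align*}
    \Bigl(\frac{\d}{\d t} + A\Bigr) j_\pm(w_t) \leq 0 \quad \text{$\mu$-a.e.}
\end{align*}
Indeed, these say that $\pm(\frac{\d}{\d t}+A)\phi(w_t) \leq -(\frac{\d}{\d t}+A)|\phi(w_t)|$, which is exactly the stated bound on the absolute value.

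\textbf{Time derivative.} For each convex $j \in \{j_+, j_-\}$ I will use the chain rule, via Lemma~\ref{lem:Nemytskii} and the qualitative assumptions, to compute
\begin{align*}
    \frac{\d}{\d t} j(w_t) = j'(w_t)\,\frac{\d}{\d t} w_t = -j'(w_t)\, A w_t \quad \text{in } \L^q.
\end{align*}
Here $|j'(s)| \lesssim |s|^{p-2}$, so the lemma applies with $\alpha = p-1$ and $r = p$, giving target space $\L^{p/(p-1)} = \L^q$. This is the same computation that already justified $t \mapsto \phi(T_t v) \in \Cont^1([0,\infty);\L^q)$. It follows that
\begin{align*}
    \Bigl(\frac{\d}{\d t} + A\Bigr) j(w_t) = A j(w_t) - j'(w_t) A w_t.
\end{align*}
This expression is $\leq 0$ a.e.\ precisely by Kato's inequality applied to $w = w_t \in \L^1 \cap \L^\infty$.

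\textbf{Main obstacle.} The one technical point is that Proposition~\ref{prop:Ouhabaz}\,\ref{Kato} requires $j'$ to be bounded, while $j_\pm'$ grows like $|s|^{p-2}$. I will handle this by truncation. Set $M \coloneqq \|v\|_\infty$. The semigroup is $\L^\infty$-contractive, so $\|w_t\|_\infty \leq M$ for all $t$. I then replace $j_\pm$ by $\tilde j_\pm$, which coincides with $j_\pm$ on $[-M,M]$ and is extended affinely (tangentially) outside this interval. The modified function is still $\Cont^1$, convex and zero at the origin, and its derivative is now bounded. Since $\tilde j_\pm(w_t) = j_\pm(w_t)$ and $\tilde j_\pm'(w_t) = j_\pm'(w_t)$ a.e., Kato's inequality for $\tilde j_\pm$ yields the desired inequality for $j_\pm$.

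Consistency of $A$ across the spaces $\L^r$ causes no trouble, since by the qualitative assumptions $A$ is bounded and consistent on every $\L^r$, and $j_\pm(w_t) \in \L^1 \cap \L^\infty$. Adding and subtracting the two inequalities for $j_+$ and $j_-$ then gives the lemma.
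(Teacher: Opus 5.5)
Your proof is correct, but it takes a different route from the paper. Both arguments rest on the same convex splitting $|\phi(x)| \pm \phi(x) = 2|x|^{p-1}\one_{\R_\pm}(x)$. They differ in which half of Proposition~\ref{prop:Ouhabaz} is used.

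The paper works at the level of the semigroup. It applies Jensen's inequality to $j_\pm$ with $T_\eps$, which gives the pointwise bound \eqref{ineq:Jensen}:
\begin{align*}
    |\phi(T_{t+\eps}v) - T_\eps\phi(T_tv)| \leq T_\eps|\phi(T_tv)| - |\phi(T_{t+\eps}v)|.
\end{align*}
It then identifies $\bigl(\frac{\d}{\d t}+A\bigr)\phi(T_tv)$ and $-\bigl(\frac{\d}{\d t}+A\bigr)|\phi(T_tv)|$ as $\L^q$-limits of the corresponding difference quotients. Since a.e.\ inequalities survive $\L^q$-limits, the lemma follows. Jensen's inequality carries no boundedness requirement on $j'$, so the paper needs neither a truncation nor the $\L^\infty$ bound on $T_tv$.

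Your argument is the infinitesimal version of this. You use the chain rule for $\frac{\d}{\d t} j_\pm(T_tv)$, via Lemma~\ref{lem:Nemytskii} with $\alpha = p-1$, $r=p$. Then Kato's inequality shows that $A j_\pm(w_t) - j_\pm'(w_t)Aw_t \leq 0$. This makes the structure very transparent: the time derivative cancels exactly, and the subsolution property is literally Kato's inequality.

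The price is that Kato's inequality as stated requires bounded $j'$, which $j_\pm$ lacks. Your tangential affine extension outside $[-M,M]$, with $M=\|v\|_\infty$, handles this correctly. The extended function stays $\Cont^1$, convex and zero at the origin. It agrees with $j_\pm$ and $j_\pm'$ on the range of $w_t$ by $\L^\infty$-contractivity.

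In spirit the two proofs are very close. The paper's difference-quotient step is essentially the standard derivation of Kato's inequality from Jensen's, carried out directly for the parabolic operator $\frac{\d}{\d t}+A$ rather than for $A$ alone.
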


\begin{proof}
For $x\in \R$, we have $|\phi(x)| \pm \phi(x) =  2 | x |^{p-1}\one_{\R_{\pm}}(x)$, which is $\Cont^1$, convex and vanishes at $x=0$. Thus, Jensen's inequality yields for $t, \eps>0$ the pointwise estimate
    \begin{align*}
            |\phi(T_{t+\eps} v) | \pm \phi(T_{t+\eps} v) \leq T_\eps |\phi(T_t v) | \pm T_\eps \phi(T_t v),
    \end{align*}
which we write as
    \begin{align}
    \label{ineq:Jensen}
            |\phi(T_{t+\eps} v) - T_\eps \phi(T_t v) |  \leq T_\eps |\phi(T_t v) | - |\phi(T_{t+\eps} v)|.
    \end{align}
Above, we have seen that $t\mapsto \phi(T_t v)$ is of class $\Cont^1([0,\infty); \L^q)$ and the same is true for $t \mapsto T_t \phi(v)$. Hence, as a limit in $\L^q$, we can compute
    \begin{align*}
        \Bigl(\frac{\d}{\d t} + A\Big) \phi(T_t v)
        = \lim_{\eps \searrow 0}  \frac{\phi(T_{t+\eps}v) - \phi(T_t v)}{\eps} - \frac{T_\eps \phi(T_t v) - \phi(T_t v)}{\eps} 
        = \lim_{\eps \searrow 0} \frac{\phi(T_\eps T_t v) - T_\eps \phi(T_t v)}{\eps}.
    \end{align*}
In the same manner,
    \begin{align*}
        -\Bigl(\frac{\d}{\d t} + A\Big) |\phi(T_t v)|
        = \lim_{\eps \searrow 0} \frac{T_\eps |\phi(T_t v)|- |\phi(T_\eps T_t v)|}{\eps}.
    \end{align*}
Since $\L^q$-convergence preserves almost everywhere inequalities, the claim follows from \eqref{ineq:Jensen}.
\end{proof}
Applying the lemma on the right-hand side of \eqref{eq:first intermediate result section 5}, we find
    \begin{align*}
        2 |H(v,h)| 
        &\leq 1 + \int_0^\infty \Big \langle - \Bigl( \frac{\d}{\d t} + A \Big)|\phi(T_t v)|,|T_t h|\Big \rangle \dd t \\
        &\leq 1 - \int_0^\infty \Big \langle T_t \Bigl(\frac{\d}{\d t} + A \Big)|\phi(T_t v)|,|h|\Big \rangle \dd t,
    \end{align*}
where the second step uses $|T_t h | \leq T_t |h|$ as a consequence of positivity and then symmetry of $T_t$. Having stripped off all $t$-dependence from $h$, we can use $\L^q$-valued integrals and then H\"older's inequality to write 
\begin{align*}
     2 |H(v,h)| 
     &\leq 1 - \bigg \langle \int_0^\infty T_t \Bigl(\frac{\d}{\d t} + A \Big)|\phi(T_t v)| \dd t, |h| \bigg \rangle \\
     &\leq 1 + \biggl\|\int_0^\infty T_t \Bigl( \frac{\d}{\d t} + A \Big)|\phi(T_t v)| \dd t \biggr\|_q.
\end{align*}
In particular,
\begin{align}
\label{eq:second intermediate result section 5}
    2H(v,h) \leq 1 + \sup_{T>0}\; \bigg\|\int_0^T T_t \Bigl( \frac{\d}{\d t} + A \Big)|\phi(T_t v)| \dd t \biggr\|_q.
\end{align}
To assess how far we have progressed toward the goal in \eqref{eq:goal section 5}, it is instructive to interpret the right-hand side of \eqref{eq:second intermediate result section 5} as follows. Here, $T>0$ is fixed. For $t \in [0,T]$ we introduce three $\L^q$-valued functions:
\begin{align}
\label{eq:u,f,w}
    u(t)\coloneqq |\phi(T_t v)|, \quad 
    -f(t)\coloneqq u^\prime(t) + A u(t), \quad 
     w(t)\coloneqq \int_t^T T_{s-t}f(s) \dd s.
\end{align} 
Then $u$ is continuously differentiable and solves $u^\prime + Au = -f$, where the right-hand side is negative by Lemma~\ref{lem:subsol}. Our qualitative assumptions guarantee that $w$ is a continuously differentiable solution to the backwards equation $w^\prime - A w = -f$ with $w(T) = 0$. The term that we have on the right-hand side of \eqref{eq:second intermediate result section 5} is $\|w(0)\|_q$ and what we need on the right-hand side of \eqref{eq:goal section 5} is $q\|u(0)\|_q = q\|v\|_p^{p-1} = q$. Thus, we need control of $\|w(0)\|_q$ by $q\|u(0)\|_q$. 

To this end, we need two lemmas: one gathers additional qualitative properties of $u$ and $w$; the other one is a more general subsolution lemma for functions with precisely these additional properties that gives the desired comparison.
\begin{lem}
\label{lem:qualitative propertie of $u$ and $w$}
Fix $T>0$. The functions $u, w: [0,T] \to \L^q$ defined in \eqref{eq:u,f,w} take their values in the positive measurable functions on $\Omega$ and for every $r \in [1,\infty]$ there is a constant $C$ such that 
\begin{align*}
    \|u(t)\|_r + \|w(t)\|_r \leq C, \quad t \in [0,T].
\end{align*}
\end{lem}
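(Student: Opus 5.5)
Positivity of $u$ is immediate from its definition, $u(t)=|\phi(T_tv)| = |T_tv|^{p-1}\ge 0$. The $\L^r$ bound for $u$ comes from the qualitative assumptions. Since $v\in\L^1\cap\L^\infty$ and $T_t$ is contractive on every $\L^s$, $s\in[1,\infty]$, we have
\begin{align*}
\|u(t)\|_r = \|T_tv\|_{r(p-1)}^{p-1} \le \|v\|_{r(p-1)}^{p-1}.
\end{align*}
The right-hand side is finite because $r(p-1)\ge 1$ (recall $p>2$) and $v\in\L^1\cap\L^\infty$. This handles $u$ uniformly in $t\ge 0$, so in particular on $[0,T]$.

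For $w$ I would treat positivity and integrability separately. Positivity: by Lemma~\ref{lem:subsol}, $-f(t) = (\frac{\d}{\d t}+A)u(t)$ is bounded above by $-|(\frac{\d}{\d t}+A)\phi(T_tv)|\le 0$, so $f(s)\ge 0$ a.e. Positivity of the semigroup then gives $T_{s-t}f(s)\ge 0$. The integral $w(t)=\int_t^T T_{s-t}f(s)\dd s$ is an $\L^q$-valued Bochner integral, so it can be tested against nonnegative $g\in\L^p$, and it is therefore nonnegative a.e.

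Integrability of $w$: the natural route is to bound $\|f(s)\|_r$ uniformly for $s\in[0,T]$ and then use contractivity of $T_{s-t}$ on $\L^r$, which yields $\|w(t)\|_r\le T\sup_s\|f(s)\|_r$. Since $A$ is bounded on every $\L^r$, we have $\|Au(s)\|_r\le \|A\|_{r\to r}\|u(s)\|_r$, and this is bounded by the previous step. For $u'(s)$ I would use the chain rule pointwise: $u'(s) = (p-1)|T_sv|^{p-2}\operatorname{sgn}(T_sv)\,(-AT_sv)$, which makes sense because $p>2$. Hölder's inequality then gives
\begin{align*}
\|u'(s)\|_r\le (p-1)\|T_sv\|_{\infty}^{p-2}\|AT_sv\|_r\le (p-1)\|v\|_\infty^{p-2}\|A\|_{r\to r}\|v\|_r.
\end{align*}

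The one step that needs care, and which I expect to be the main obstacle, is justifying that the $\L^q$-derivative of $u$ coincides with this pointwise formula. I would invoke Lemma~\ref{lem:Nemytskii}, with $\psi(x)=|x|^{p-1}$ and $\alpha=p-1$, on $\L^{r'}$ for a large finite $r'$. This gives $u\in \Cont^1$ into $\L^{r'/(p-1)}$ with the stated derivative. Two consistency points remain. First, derivatives in different $\L^s$ spaces agree a.e. Second, the case $r=\infty$ is covered directly, because the pointwise bound above holds with $\|\cdot\|_\infty$ and a.e. limits of difference quotients preserve it. Finally, I would check that $w$ is well defined as a Bochner integral in each $\L^r$ with $r<\infty$. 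For $r=\infty$ I would argue by duality or by taking the limit $r\to\infty$ in the bound $\|w(t)\|_r\le T\sup_s\|f(s)\|_r$, whose right-hand side is bounded uniformly in $r$ by the estimates above.
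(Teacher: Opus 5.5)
Your proof is correct and follows the paper's argument. Positivity of $w$ comes from Lemma~\ref{lem:subsol} together with positivity of the semigroup, and the $\L^r$ bound on $w$ comes from uniform bounds on $\|u'(t)\|_r$ and $\|Au(t)\|_r$, using the pointwise formula $|u'(t)|=(p-1)|T_tv|^{p-2}|AT_tv|$ obtained from Lemma~\ref{lem:Nemytskii}. The only cosmetic difference is that you bound $\|u(t)\|_r$ by $\|v\|_{r(p-1)}^{p-1}$, whereas the paper uses $|T_tv|^{p-1}\le\|v\|_\infty^{p-2}|T_tv|$.

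One small point: if you take the limit route for $r=\infty$, uniformity in $r$ needs $\sup_r\|A\|_{r\to r}<\infty$, which you get from Riesz--Thorin. The duality route (testing against $g\in\L^1\cap\L^p$) gives $\|w(t)\|_\infty\le T\sup_s\|f(s)\|_\infty$ directly.
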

\begin{lem}
\label{lem:backeq}
Let $p>2$ and $T> 0$. Let $u,w : [0,T] \to \L^q$ be continuously differentiable with
\begin{align*}
    u^\prime + A u = w^\prime - A w \leq 0, \quad w(T) = 0.
\end{align*}
Moreover, suppose that
\begin{enumerate}[(a)]
    \item $u$ and $w$ take their values in the positive measurable functions on $\Omega$,
    \item For every $r \in [1,\infty]$ there is a constant $C$ such that 
        \begin{align*}
            \|u(t)\|_r + \|w(t)\|_r \leq C, \quad t \in [0,T].
        \end{align*}
\end{enumerate}
Then, $\|w(0)\|_q \leq q \|u(0)\|_q$.
\end{lem}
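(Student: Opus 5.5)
The plan is a monotonicity (energy) argument on $[0,T]$. Writing $-f \coloneqq u'+Au = w'-Aw$, we have $f \geq 0$, and we want to exhibit a functional that vanishes at $t=T$ and is nondecreasing in $t$. Formally, the natural choice is
\begin{align*}
    F(t) \coloneqq \int_\Omega w(t)^q \dd\mu - q\,\big\langle u(t), w(t)^{q-1}\big\rangle .
\end{align*}
Since $w(T)=0$ we have $F(T)=0$, so $F'\geq 0$ would give $F(0)\leq 0$. Then H\"older's inequality yields
\begin{align*}
    \|w(0)\|_q^q \leq q\,\|u(0)\|_q\,\|w(0)\|_q^{q-1},
\end{align*}
which is the claim after dividing by $\|w(0)\|_q^{q-1}$ (the case $w(0)=0$ is trivial).

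The formal computation of $F'$ uses $w' = Aw - f$ and $u' = -Au - f$:
\begin{align*}
    \tfrac1q F' = \big\langle w^{q-1}, Aw\big\rangle + (q-1)\big\langle u\, w^{q-2}, f\big\rangle + \Big( \big\langle Au, w^{q-1}\big\rangle - (q-1)\big\langle u\, w^{q-2}, Aw\big\rangle \Big).
\end{align*}
The two terms $\mp\langle w^{q-1}, f\rangle$ cancel. The three remaining groups are handled as follows.
\begin{enumerate}[(i)]
    \item The first term is nonnegative by $\L^q$-dissipativity of $A$ applied to the positive function $w$.
    \item The second term is nonnegative because $u, w, f \geq 0$.
    \item For the bracket, use symmetry of $A$ to write $\langle Au, w^{q-1}\rangle = \langle u, A(w^{q-1})\rangle$. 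Since $u\geq 0$, it then suffices to know the pointwise inequality $A(w^{q-1}) \geq (q-1)w^{q-2}Aw$. Because $x\mapsto x^{q-1}$ is concave for $q<2$ (this is where $p>2$ enters), this is Kato's inequality, Proposition~\ref{prop:Ouhabaz}\ref{Kato}, applied to the convex function $-x^{q-1}$.
\end{enumerate}

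The main obstacle is making this rigorous. The function $x^{q-1}$ is not $\Cont^1$ at $0$ and its derivative is unbounded, $w^{q-2}$ is singular where $w$ vanishes, and Kato's inequality requires $j\in\Cont^1(\R)$ convex with $j(0)=0$ and $j'$ bounded. I would regularize with
\begin{align*}
    j_\delta(x)\coloneqq (x+\delta)^{q-1}-\delta^{q-1}\quad (x\geq 0),
\end{align*}
extended to $x<0$ so that $-j_\delta$ is $\Cont^1$ and convex with bounded derivative (for instance linearly with slope $(q-1)\delta^{q-2}$). Let $G_\delta$ be the primitive of $q j_\delta$ vanishing at $0$, and set
\begin{align*}
    F_\delta(t)\coloneqq \int_\Omega G_\delta(w(t))\dd\mu - q\,\big\langle u(t), j_\delta(w(t))\big\rangle .
\end{align*}

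Differentiability of $F_\delta$ follows from Lemma~\ref{lem:Nemytskii} together with the chain rule, using $\Cont^1$-regularity of $u,w$ and the uniform $\L^r$ bounds in (b) to place all pairings in dual exponents. Boundedness of $A$ on every $\L^r$ (qualitative assumption) makes all pairings finite. The same computation as above then goes through with $j_\delta$ in place of $x^{q-1}$; the sign checks become:
\begin{enumerate}[(i)]
    \item $\langle j_\delta(w), Aw\rangle\geq 0$, because $j_\delta$ is increasing with $j_\delta(0)=0$. I would verify this via $\frac{\d}{\d s}\int_\Omega G_\delta(T_s w)\dd\mu\leq 0$, using Jensen's inequality for the convex function $G_\delta$ and the $\L^1$-contractivity of $T_s$.
    \item $(q-1)\langle u\, j_\delta'(w), f\rangle \geq 0$, since $j_\delta'\geq 0$ on the positive values of $w$ and $u,f\geq 0$.
    \item $A j_\delta(w)\geq j_\delta'(w)Aw$, by Kato's inequality applied to $-j_\delta$.
\end{enumerate}
Hence $F_\delta(0)\leq F_\delta(T)=0$.

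Finally let $\delta\to0$. We have $j_\delta(x)\to x^{q-1}$ and $G_\delta(x)\to x^q$ pointwise for $x\geq 0$, with the domination $0\leq j_\delta(x)\leq x^{q-1}$, which follows from concavity (subadditivity) of $x^{q-1}$. Monotone/dominated convergence then gives $F(0)\leq 0$, and the H\"older step above finishes the proof.
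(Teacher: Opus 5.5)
Your argument is correct and is essentially the paper's proof. Your $F$ combines the paper's two separate inequalities into one monotone quantity: Lumer--Phillips for $\tfrac1q\|w\|_q^q$, and Kato's inequality plus symmetry for $\langle u,\rho_\eps(w)\rangle$. The $\langle w^{q-1},f\rangle$ terms therefore cancel before integration rather than after, which is why you must also regularize $\|w\|_q^q$ via $G_\delta$ and replace Lumer--Phillips by your Jensen argument for $\langle j_\delta(w),Aw\rangle\geq 0$.

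One technical point needs tightening. Since $u,w$ are only assumed $\Cont^1$ into $\L^q$, Lemma~\ref{lem:Nemytskii} must be used with $\alpha=q$. Neither $x\,j_\delta(y)$ nor $G_\delta$ (with its linear extension to $x<0$) satisfies the global bound $|D\psi(z)|\leq C|z|^{q-1}$, so you need the paper's cut-offs $\sigma,\varrho$ or a direct difference-quotient argument. The cut-offs are harmless because $0\leq u,w\leq C$ by (a) and (b).
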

Taking the lemmas for granted, we obtain from \eqref{eq:second intermediate result section 5} the bound $2H(v,h) \leq 1 + q$, which is \eqref{eq:goal section 5}. This completes the proof of Theorem~\ref{thm:symm} modulo the proofs of the two lemmas.
\begin{proof}[Proof of Lemma~\ref{lem:qualitative propertie of $u$ and $w$}]
Clearly $u$ is positive. By Lemma~\ref{lem:subsol} we see that $f$ is positive and since the semigroup is positive, also $w$ is positive. Now, let $r \in [1,\infty]$. Since $v \in \L^1 \cap \L^\infty$ we have the pointwise bound
\begin{align*}
    |u(t)| = |T_tv|^{p-1} \leq \|T_t v\|_\infty^{p-2} |T_tv| \leq \|v\|_\infty^{p-2} |T_tv|
\end{align*}
and therefore $\|u(t)\|_r \leq \|v\|_\infty^{p-2} \|v\|_r$. Since $A$ is bounded on $\L^r$ for every $r$ and $|u^\prime(t)| = (p-1)|T_t v|^{p-2} |A T_tv|$ by Lemma~\ref{lem:Nemytskii}, we also find a constant $C$ independent of $t \in [0,T]$ such that $ \|u^\prime(t)\|_r + \|A u(t)\|_r \leq C$. We conclude $\|f(t)\|_r \leq C$ and $\|w(t)\|_r \leq CT$.
\end{proof}
\begin{proof}[Proof of Lemma~\ref{lem:backeq}]
We set $-f \coloneqq u^\prime + A u \leq 0$. Fr\'echet differentiability of the $q$-th power of the $\L^q$-norm and the chain rule yield
    \begin{align*}
        \frac{\d}{\d t}\frac{1}{q}\|w(t)\|_q^q 
        &= \big \langle |w(t)|^{q-1},w^\prime(t) \big \rangle \\
        &= \big \langle |w(t)|^{q-1},A w(t) - f(t)\big \rangle \\
        &\geq - \big \langle |w(t)|^{q-1},f(t) \big \rangle,
    \end{align*}
where the last inequality follows from the Lumer--Phillips theorem. Since $w(T) = 0$, integration in $t$ yields
    \begin{align}
    \label{ineq:w0}
        \frac{1}{q} \|w(0)\|_q^q \leq \int_0^T \big \langle  |w(t)|^{q-1},f(t)\big \rangle \dd t.
    \end{align}
    
In order to estimate the right-hand side, we approximate bilinear pairings involving $|w|^{q-1}$. For every $\eps>0$ we pick a function $\psi_\eps \in \Cont_c^1(\R^2; \R)$ that vanishes at the origin and satisfies $D\psi_\varepsilon(z)=O(|z|)$ as $z \to 0$ such that
\begin{align}
\label{eq:goal Nemitskii shit}
    \big \langle  u, |w + \eps|^{q-1} - \eps^{q-1} \big \rangle = \int_\Omega \psi_\eps(u,w) \dd \mu,
\end{align}
where, as in the following computations, we suppress the dependence on $t$ for readability. In particular, $\psi_\eps$ fits into the setting of Lemma~\ref{lem:Nemytskii} with $\alpha = q$.

The precise construction is as follows. By assumption there is a constant $C$ such that $\|u(t)\|_\infty + \|w(t)\|_\infty \leq C$ for all $t \in [0,T]$. We pick a concave, increasing function $\rho_\eps \in \Cont^1(\R)$ with bounded derivative that coincides with $|y + \eps|^{q-1} - \eps^{q-1}$ for $0 \leq y \leq C$. This is possible since $q-1 \in (0,1)$ and $y \mapsto |y + \eps|^{q-1} - \eps^{q-1}$ is concave for $y > -\eps$. We also take $\sigma, \varrho \in \Cont_c^1(\R)$ such that $\sigma(x) = x$ and $\varrho(y) = 1$ for $x,y \in [-C,C]$. By construction and since $w$ is positive, the respective Nemytskii operators act as
\begin{align}
\label{eq1:backeq}
    \rho_\eps(w) = |w + \eps|^{q-1} - \eps^{q-1}, \quad \sigma(u) = u, \quad \varrho(w) = 1
\end{align}
and with $\psi_\eps(x,y) \coloneqq \sigma(x) \rho_\eps(y) \varrho(y)$ we indeed have \eqref{eq:goal Nemitskii shit}.
As $q-1 \in (0,1)$, we also see that 
\begin{align}
\label{eq2:backeq}
    \rho_\eps (w) \nearrow |w|^{q-1} \quad \text{as} \quad \eps \searrow 0.
\end{align}

By Lemma~\ref{lem:Nemytskii} and the chain rule, we can now differentiate both sides of \eqref{eq:goal Nemitskii shit} with respect to $t$ and obtain
    \begin{align*}
        \frac{\d}{\d t} \big \langle u , \rho_\eps(w) \big\rangle 
        &= \big \langle u^\prime, \sigma'(u) \rho_\eps(w) \varrho(w) \big \rangle + \big \langle w^\prime , \sigma(u) (\rho_\eps \varrho)^\prime(w) \big \rangle \\
        &= \big \langle u^\prime, \rho_\eps(w) \big \rangle + \big \langle u , \rho_\eps^\prime(w)w^\prime \big \rangle \\
        &= - \big \langle Au, \rho_\eps(w) \big \rangle - \big \langle f , \rho_\eps(w) \big \rangle + \big \langle u, \rho_\eps^\prime(w) A w \big \rangle - \big \langle u, \rho_\eps^\prime(w) f \big \rangle \\
        &=\big\langle u , \rho_\eps^\prime(w)Aw - A \rho_\eps(w)\big \rangle - \big \langle f, \rho_\eps(w) \big \rangle - \big \langle u,\rho_\eps^\prime(w)f\big \rangle.
    \end{align*}
On the right-hand side the third bilinear pairing is positive since $u, f$ are positive and $\rho_\eps$ is increasing. The first pairing is negative since $u$ is positive and by Kato's inequality with $j = -\rho_\eps$ we have $- A \rho_\eps(w) \leq - \rho_\eps^\prime(w) A w$, see Proposition~\ref{prop:Ouhabaz}~\ref{Kato}. Thus, we have
    \begin{align*}
        \frac{\d}{\d t}\big \langle u , \rho_\eps(w) \big \rangle \leq - \big \langle f, \rho_\eps(w) \big \rangle.
    \end{align*}
Integration in $t$, taking into account $\rho_\eps(w(T)) = \rho_\eps(0) = 0$, leads us to
    \begin{align*}
           \int_0^T \big \langle f(t),\rho_\eps(w(t))\big \rangle \dd t \leq  \big \langle u(0), \rho_\eps(w(0)) \big \rangle.
    \end{align*}
Since $f$ and $u$ are positive, we can use \eqref{eq2:backeq} and the monotone convergence theorem to pass to the limit as $\eps \searrow 0$ and conclude
    \begin{align*}
    \int_0^T \big \langle f(t),|w(t)|^{q-1} \big \rangle \dd t \leq \big \langle u(0), |w(0)|^{q-1} \big \rangle
    \leq \|u(0)\|_q \|w(0)\|_q^{q-1},
    \end{align*}
where the second step is due to H\"older's inequality. Together with \eqref{ineq:w0}, the claim follows.
\end{proof}
\section{Proofs of Corollaries~\ref{cor:Riesz-Thorin non-symmetric} and \ref{cor:Riesz-Thorin non-symmetric div}}
\label{sec:interpolation}

This section contains the proofs of the corollaries stated in the introduction, both of which follow from Theorem~\ref{thm:non-symm} by the Riesz–Thorin interpolation theorem. By duality (Lemma~\ref{lem:permanence MR}~\ref{item1:permanence MR}) it suffices to consider exponents $p \geq 2$.
We begin with the general extension of Theorem~\ref{thm:non-symm} to non-integer exponents $p \geq 2$. It relies on the observation that Stein interpolation allows us to choose $\theta_p \geq \nicefrac{2 \theta_2}{p}$, which yields
\begin{align}
\label{Steiniger Ungleichung}
   \cot(\theta_p) \leq \frac{1}{\theta_p} \leq \frac{p}{2 \theta_2},
\end{align}
see, for example, \cite[Theorem~10.10]{AVV26}.
\begin{proof}[Proof of Corollary~\ref{cor:Riesz-Thorin non-symmetric}]
For integers $n$, Theorem~\ref{thm:non-symm} together with \eqref{Steiniger Ungleichung} yields the universal bound 
    \begin{align}
    \label{eq1:Riesz-Thorin non-symmetric}
        \MR_{2n}(A) 
        &\leq 1+ \frac{n(2n-1)}{\theta_2}.
    \end{align}
Now, fix $p \geq 2$. We pick the integer $n$ with $2n \leq p \leq 2n+2$ and take $\lambda \in [0,1]$ such that $\nicefrac{1}{p} = \nicefrac{(1-\lambda)}{2n} + \nicefrac{\lambda}{(2n+2)}$. A technical subtlety forces us to split the argument into two cases.

\emph{Case 1: $n \geq 2$.} 
The Riesz--Thorin theorem yields
\begin{align*}
     \MR_p(A) 
        &\leq \MR_{2n}(A)^{1-\lambda} \MR_{2n+2}(A)^\lambda \\
        &\leq \Bigl(1+ \frac{n(2n-1)}{\theta_2}\Bigr)^{1-\lambda} \Bigl(1+ \frac{(n+1)(2n+1)}{\theta_2}\Bigr)^{\lambda}.
\end{align*}
By logarithmic concavity of $x \mapsto 1+\nicefrac{x^2}{\theta_2}$ for $x \geq \sqrt{\theta_2}$ (first line) and H\"older's inequality with conjugate exponents $2$ and $2$ (third line), we find
    \begin{align}
         \MR_p(A) 
         &\leq 1+ \frac{1}{\theta_2}\Bigl( (1-\lambda) \sqrt{n} \sqrt{2n-1} + \lambda \sqrt{n+1} \sqrt{2n+1} \Bigr)^2 \notag \\
         &= 1+ \frac{1}{\theta_2}\Bigl( \sqrt{(1-\lambda)n}\cdot \sqrt{(1-\lambda)(2n-1)} + \sqrt{\lambda(n+1)} \cdot \sqrt{\lambda(2n+1)} \Bigr)^2 \notag \\
         &\leq 1+ \frac{1}{\theta_2}\bigl(n+\lambda\bigr)\bigl(2n+2\lambda-1\bigr). \label{eq2:Riesz-Thorin non-symmetric}
\intertext{We re-introduce $p$ on the right-hand side through $p (n+1-\lambda) = 2n(n+1)$ and then apply Cauchy's inequality $ab \leq (\nicefrac{a}{2} + \nicefrac{b}{2})^2$ in order to conclude} 
         &= 1 + \frac{1}{2\theta_2} \Bigl(\frac{p}{n(n+1)}\Big)^2 \bigl(n+\lambda\bigr)\bigl(n+1-\lambda\bigr)\bigl(n+\lambda - \tfrac12\bigr) \bigl(n+1-\lambda\bigr) \notag \\ 
         &\leq 1 + \frac{1}{2\theta_2} \Bigl(\frac{p}{n(n+1)}\Big)^2 \bigl(n+\tfrac12\bigr)^2\bigl(n+\tfrac14\bigr)^2 \notag \\ 
         &= 1 + \frac{p^2}{2\theta_2} \biggl(\frac{(n+\frac12)(n+\frac14)}{n(n+1)}\biggr)^2. \notag
    \end{align}
    Comparison with the claim shows that it suffices to bound the remaining term in $n$ by $1$, which we can do by simply expanding the numerator and denominator:
    \begin{align*}
        n(n+1) - \Bigl(n+\frac12\Bigr)\Bigl(n+\frac14\Bigr) = \frac{n}{4} - \frac18  > 0.
    \end{align*}

\emph{Case 2: $n = 1$.} 
By Proposition~\ref{prop:L2} we have
    \begin{align*}
        \MR_2(A) \leq \frac{1}{\sin(\theta_2)} = \frac12 \Bigl( \tan\Bigl(\frac{\theta_2}{2}\Big) + \cot\Bigl(\frac{\theta_2}{2}\Big)\Big) \leq \frac12\Bigl(1+\frac{1 \cdot 2}{\theta_2}\Big),
    \end{align*}
    where, compared to \eqref{eq1:Riesz-Thorin non-symmetric} for $n=1$, we have gained the prefactor $\nicefrac{1}{2}$ at the expense of changing $(2n-1)$ to $2n$ in the numerator.
    Since $\sqrt{2} \geq \sqrt{\nicefrac{\pi}{2}} \geq \sqrt{\theta_2}$, this change makes log-concavity from the first step available and we can follow the argument in Case~1 up to \eqref{eq2:Riesz-Thorin non-symmetric} to give
    \begin{align*}
        \MR_p(A)
        &\leq  2^{\lambda-1} \Bigl( 1+\frac{1}{\theta_2} \bigl(1+\lambda \bigr)\bigl(2+\lambda\bigr) \Big).
    \end{align*}
    A priori, the right-hand side is different from how \eqref{eq2:Riesz-Thorin non-symmetric} reads when $n=1$. However, by convexity we have $2^{\lambda-1} \leq \nicefrac{1}{2}+ \nicefrac{\lambda}{2}$ and, consequently,
    \begin{align*}
        \MR_p(A) 
        &\leq 1+ \frac{1}{\theta_2} \bigl(1+\lambda \bigr)\bigl(1+\lambda\bigr)\bigl(1+\tfrac{\lambda}{2} \bigr) \\
        &\leq 1+ \frac{1}{\theta_2} \bigl(1+\lambda \bigr)\bigl(1+2 \lambda \bigr),
    \end{align*}
    which is precisely \eqref{eq2:Riesz-Thorin non-symmetric} for $n=1$. Now, we can complete the proof as in Case 1.
\end{proof}
In the special case that $A$ is a divergence form operator with real, bounded, elliptic coefficients and mixed boundary conditions as in the assumptions of Corollary~\ref{cor:Riesz-Thorin non-symmetric div}, we may choose $\theta_p$ such that we have the more precise estimate of \cite[Corollary~3.8]{Chill2021}:
\begin{align}
\label{Chillige Ungleichung}
\cot(\theta_p) \leq \frac{\sqrt{(p-2)^2 + p^2 \tan^2(\vartheta)}}{2 \sqrt{p-1}}.
\end{align}
\begin{proof}[Proof of Corollary~\ref{cor:Riesz-Thorin non-symmetric div}]
We abbreviate $c\coloneqq\sqrt{1+\tan^2(\vartheta)}$. For $n$ an integer, Theorem~\ref{thm:non-symm} together with \eqref{Chillige Ungleichung} yields the universal bound
    \begin{align*}
        \MR_{2n}(A) 
        &\leq 1+ \sqrt{2n-1} \sqrt{(n-1)^2 + n^2 \tan^2(\vartheta) } \\
        &\leq 1 + cn \sqrt{2n-1}.
    \end{align*}
We retain the notation from the proof of Corollary~\ref{cor:Riesz-Thorin non-symmetric}. The argument follows the same pattern, with two changes: We use $x \mapsto 1+cx^{3/2}$ as our log-concave function and we apply H\"older's inequality with conjugate exponents $\nicefrac32$ and $3$. No special treatment for $n=1$ is needed since this time we have log-concavity for all $x \geq 1$. In this vein, we get
    \begin{align*}
         \MR_p(A) 
         & \leq \Bigl( 1+ c n \sqrt{2n-1} \Big)^{1-\lambda}\Bigl( 1+ c (n+1)\sqrt{2n+1} \Big)^\lambda\\
         &\leq 1+ c\Bigl( (1-\lambda) \bigl(n\sqrt{2n-1}\bigr)^{2/3} + \lambda  \bigl((n+1)\sqrt{2n+1}\bigr)^{2/3} \Bigr)^{3/2} \\
         &= 1 + c \Bigl( (1-\lambda)^{2/3} n^{2/3} \cdot (1-\lambda)^{1/3} (2n-1)^{1/3} + \lambda^{2/3} (n+1)^{2/3} \cdot \lambda^{1/3} (2n+1)^{1/3} \Bigr)^{3/2} \\
         &\leq 1 + c \bigl(n + \lambda\bigr) \bigl(2 n+ 2 \lambda-1\bigr)^{1/2}.\\
         &=1 + \frac{c}{2} \Bigl( \frac{p}{n(n+1)}\Big)^{3/2} \bigl(n+\lambda\bigr)\bigl(n+1-\lambda \bigr) \bigl( n+\lambda - \tfrac{1}{2}\bigr)^{1/2} \bigl(n+1-\lambda\bigr)^{1/2} \\
         &\leq 1 + \frac{c}{2} \Bigl( \frac{p}{n(n+1)}\Big)^{3/2} \bigl(n + \tfrac12\bigr)^2\bigl(n+\tfrac14\bigr)\\
         &= 1+ \frac{c p^{3/2}}{2} \biggl( \frac{(n+\frac12)^{4} (n+\frac14)^{2}}{n^3(n+1)^3} \biggr)^{1/2}.
    \end{align*}
    It suffices to bound the remaining term in $n$ by $1$. While there may be more elegant ways to see this, expanding the numerator and denominator again does the job:
    \begin{align*}
        n^3(n+1)^3-\Bigl(n+\frac12\Bigr)^4\Bigl(n+\frac14\Bigr)^2
        =\frac{(n-1)(128n^4+240n^3+144n^2+40n+24)+23}{256} >0. & \qedhere
    \end{align*}
\end{proof}
\bibliographystyle{abbrv}
\bibliography{references.bib}

\end{document}